\newtheorem{theorem}{Theorem}
\newtheorem{lemma}{Lemma}
\newtheorem{remark}{Remark}
\newtheorem{assumption}{Assumption}
\newcommand{\mR}{\mathbb{R}}
\newcommand{\mS}{\mathbb{S}}
\newcommand{\trueQ}{\bar{Q}}
\newcommand{\trueq}{\bar{q}}
\DeclareMathOperator*{\mE}{\mathbb{E}}
\DeclareMathOperator*{\mP}{\mathbb{P}}
\newcommand{\st}{\text{s.t. }}
\DeclareMathOperator*{\tr}{tr}
\DeclareMathOperator*{\cov}{cov}
\newcommand{\mfx}{\bm{x}}
\newcommand{\mfy}{\bm{y}}
\newcommand{\mfz}{\bm{z}}
\newcommand{\mfw}{\bm{w}}
\newcommand{\mfu}{\bm{u}}
\def\utilde#1{\mathord{\vtop{\ialign{##\crcr
$\hfil\displaystyle{#1}\hfil$\crcr\noalign{\kern1.5pt\nointerlineskip}
$\hfil\tilde{}\hfil$\crcr\noalign{\kern1.5pt}}}}}
\begin{document}

\title{Statistically Consistent Inverse Optimal Control for Linear-Quadratic Tracking with Random Time Horizon}

\author{Han Zhang\aref{SJTU},
        Axel Ringh\aref{CHALMERS_AND_GU},
        Weihan Jiang\aref{SJTU},
        Shaoyuan Li\aref{SJTU},
        Xiaoming Hu\aref{KTH}
        }


\affiliation[SJTU]{Department of Automation, School of Electronic Information and Electrical Engineering, Shanghai Jiao Tong University, Shanghai, China
        \email{\{zhanghan\_tc, JiangWH9, syli\}@sjtu.edu.cn}}
\affiliation[CHALMERS_AND_GU]{Department of Mathematical Sciences, Chalmers University of Technology and University of Gothenburg, 412 96, Gothenburg, Sweden
        \email{axelri@chalmers.se}}
\affiliation[KTH]{Department of Mathematics, KTH Royal Institute of Technology, SE-100 44, Stockholm, Sweden
        \email{hu@kth.se}}

\maketitle

\begin{abstract}
The goal of Inverse Optimal Control (IOC) is to identify the underlying objective function based on observed optimal trajectories. It provides a powerful framework to model expert's behavior, and a data-driven way to design an objective function so that the induced optimal control is adapted to a contextual environment.
In this paper, we design an IOC algorithm for linear-quadratic tracking problems with random time horizon,
and prove the statistical consistency of the algorithm. More specifically, the proposed estimator is the solution to a convex optimization problem, which means that
the estimator does not suffer from local minima. This enables the proven statistical consistency to actually be achieved in practice.
The algorithm is also verified on simulated data as well as data from a real world experiment, both in the setting of identifying the objective function of human tracking locomotion.
The statistical consistency is illustrated on the synthetic data set, and the experimental results on the real data shows that we can get a good prediction on human tracking locomotion based on estimating the objective function. It shows that the theory and the model have a good performance in real practice. Moreover, the identified model can be used as a control target in personalized rehabilitation robot controller design, since the identified objective function describes personal habit and preferences.
\end{abstract}

\keywords{Inverse optimal control, trajectory tracking, system identification, convex optimization, semidefinite programming, human locomotion modelling, rehabilitation}

\footnotetext{This work was partially supported by National Natural Science
Foundation (NNSF) of China under Grant 62103276, and partially by the Wallenberg AI, Autonomous Systems and Software Program (WASP) funded by the Knut and Alice Wallenberg Foundation.}

\section{Introduction}

Optimal control is a powerful framework in which control actions are selected in order to minimize some given objective function; see, e.g., one of the monographs \citep{anderson2007optimal, bertsekas2000dynamic}. In fact, many processes in nature have been observed to be optimal with respect to some criteria \citep{alexander1996optima}. However, in engineering applications of optimal control, a \emph{fundamental} problem is to design an appropriate objective function:  it needs to be adapted to the contextual environment in which the system is operating in order to induce an appropriate control response. This is a difficult task, which relies heavily on the designers imagination. One way to overcome this would be to, instead of designing the cost criteria, identify it from observations of an expert system that behaves ``optimally" in the environment. The latter is known as Inverse Optimal Control (IOC) \citep{kalman1964linear}, and has received considerable attention. In particular, the linear-quadratic problem has been studied in many different settings, including the infinite-horizon case in both continuous time \citep{boyd1994linear, anderson2007optimal} and discrete time \citep{priess2014solutions}, respectively, as well as the finite-horizon case in both continuous time \citep{li2018convex, li2020continuous} and discrete time \citep{keshavarz2011imputing, zhang2019inverse, yu2021system, zhang2021inverse}, respectively. More general underlying dynamics and objective functions have also been considered \citep{keshavarz2011imputing, hatz2012estimating, pauwels2016linear, rouot2017inverse, molloy2018finite, molloy2020online} and applied in areas such as, e.g.,  machine learning \citep{finn2016guided, kopf2017inverse}, and to model and analyze human locomotion of different forms \citep{mombaur2010human, westermann2020inverse}.

However, all of the above IOC frameworks have limitations, in particular if one wants to apply IOC in a real-world situation. More precisely, any real-world data would inevitably contain noise: it can be process noise, observation noise, or both. Therefore, from a robustness and accuracy perspective, it is important to have an unbiased estimator which is statistically consistent, i.e., that converges to the true underlying parameter values when the number of observation increases. To the best of our knowledge, none of the IOC frameworks with more general underlying dynamics and objective functions have considered this aspect. Furthermore, they all suffer from the fact that the estimation problems are nonconvex, and hence a globally optimal solution cannot be guaranteed in practice.
Regarding the literature on linear-quadratic IOC, most of them consider the stabilization problem. However, in many experimental set-ups, it is of greater interest to have the subject under investigation track a time-varying reference signal and to identify the corresponding objective function. Finally, real-world data can be of different time lengths, and this needs to be handled in a systematic way in order not to deteriorate the estimates.

In this work, we address these issues. More specifically, we consider the case of linear-quadratic, discrete-time IOC. The observations are obtained from expert systems tracking a given reference signal, which naturally also puts us in a finite-horizon setting since the reference signal is of finite length. Moreover, we model the data as arising from a system with process noise. Finally, in order to handle data of different time lengths, we also model the planing horizon in the forward problem as stochastic. Despite this, we can formulate the estimator as the solution to a convex optimization problem, and we show that it is statistically consistent. Lastly, we demonstrate the novelty and effectiveness of the method on a real-world experiment, where we identify the objective function of a person performing rotational motions of the elbow to track a given reference signal.

\textit{Notations:} $\mathbb{Z}_+$ denotes postive integers, and $\mathbb{S}^n_+$ denote the sets of $n\times n$ positive semi-definite matrices. We use $\succeq$ to denote the Loewner partial order on $\mathbb{S}^n_+$, i.e., for $G_1,G_2\in\mathbb{S}^n$, $G_1\succeq G_2$ means that $G_1-G_2\in\mathbb{S}^n_+$, and $G_1\succ G_2$ means that $G_1-G_2$ is strictly positive definite. $\|\cdot\|$ denotes the $l_2$-norm and $\|\cdot\|_F$ denotes the Frobenius norm.
$\bm{1}_m$ denotes an all-one vector of dimension $m$, and $\otimes$ denotes Kronecker product. Further, we use \textit{\textbf{italic bold font}} to denote stochastic vectors, and we use $\mE_{\mfx}[\cdot]$ and $\mE_{\mfx|\mfy}[\cdot]$ to denote the expected value under the distribution $\mP(\mfx)$ and the conditional distribution $\mP(\mfx | \mfy)$, respectively. The conditional covariance is denoted as $\cov_{\mfx, \mfy| \mfz}(\mfx, \mfy):=\mE_{\mfx, \mfy| \mfz}[(\mfx -\mE_{\mfx | \mfz}[\mfx])(\mfy-\mE_{\mfy | \mfz}[\mfy])^T]$.
Finally, $\overset{p}\rightarrow$ denotes convergence in probability.

\section{Problem formulation}
We start by introducing the mathematical formulation of the forward optimal control problem.
To this end, let $(\Omega,\mathcal{F},\mathbb{P})$ be a probability space that carries random vectors $\bar{\mfx}\in\mR^n$, $\{\mfw_t\in\mR^n\}_{t=1}^\infty$, and a random variable $\mathscr{N}\in\{\nu_1,\nu_1+1,\cdots,\nu_2\}\subset\mathbb{Z}_+$.
For each realization $(\bar{x},N)$ of the random element $(\bar{\mfx},\mathscr{N})$, corresponding to the initial position and planning horizon length, suppose that the tracking control of an agent is determined by a stochastic linear-quadratic control problem with respect to some known reference signal $\{ x_{t}^{r} \}_{t=1}^{\nu_2}$, namely,
\begin{subequations}\label{eq:stochastic_forward_problem}
\begin{align}
\min_{\substack{\mfx_{1:\nu_2} , \\ \mfu_{1:\nu_2}}}
  \;
  & \; \mE_{\mfw_{\nu_2-N+1:\nu_2-1}} \Big[\frac{1}{2} (\mfx_{\nu_2} -  x_{\nu_2}^{r})^{T} \trueQ  (\mfx_{\nu_2} -  x_{\nu_2}^{r}) \label{eq:stochastic_forward_problem_cost} \\
  & \;\; +\sum_{t = \nu_2 - N+1}^{\nu_2-1} [\frac{1}{2} (\mfx_{t} -  x_{t}^{r})^{T} \trueQ  (\mfx_{t} -  x_{t}^{r}) + \frac{1}{2}\|\mfu_{t}\|^2 \Big] \nonumber \\
  \st
  & \; \mfx_{t+1} = A\mfx_{t} + B(\mfu_{t}+\mfw_t), \; t = \nu_2\!-\!N\!+\!1:\nu_2\!-\!1, \label{eq:stochastic_forward_problem_dynamics} \\
  & \; \mfx_{\nu_2-N + 1} = \bar{x}, \label{eq:stochastic_forward_problem_init_cond}\\
  & \; \mfx_1=\ldots=\mfx_{\nu_2-N} = 0, \label{eq:stochastic_forward_problem_before_init_cond} \\
  & \; \mfu_1=\ldots=\mfu_{\nu_2-N} = 0. \label{eq:stochastic_control_stand_still}
\end{align}
\end{subequations}
Here, $\trueQ \in\mS_+^n$, and $\mfu_t\in\mR^m$ is the control signal. 
Notably, \eqref{eq:stochastic_forward_problem} means that the agent starts tracking the reference signal $\{x_t^r\}_{t=1}^{\nu_2}$ at the time instant $\nu_2-\mathscr{N}+1$, which is stochastic, and hence that the tracking is done over different planning horizons but with \textit{the same terminal cost} and \textit{running cost}.
More precisely, given the time-horizon length $\mathscr{N}=N$ and initial value $\bar{\mfx} = \bar{x}$, the distribution of the agent's optimal trajectory $\mP(\mfx_{1:N}^{\star}\mid \mathscr{N}=N,\bar{\mfx}=\bar{x})$ and optimal control $\mP(\mfu_{1:N-1}^{\star} \mid \mathscr{N}=N,\bar{\mfx}=\bar{x})$ are implicitly given by solving \eqref{eq:stochastic_forward_problem}, where we for convenience set $ \mfx_t = 0$ and $\mfu_t = 0 $ for time points $t$ before the tracking starts (see \eqref{eq:stochastic_forward_problem_before_init_cond} and \eqref{eq:stochastic_control_stand_still}).
In this context, we make the following mild assumptions on the stochastic planning horizon.

\begin{assumption}[Stochastic planning horizon]\label{ass:planning_horizon}
The constant $\nu_2 \in \mathbb{Z}_+$ is known, and $\nu_2 \ge n+1$. Moreover, the probability distribution for $\mathscr{N}$ satisfies $\mP(\mathscr{N} \in [\nu_1, \nu_2])=1$, and $\mP(\mathscr{N} = \nu_2) > 0$, where $\nu_1 \in \mathbb{Z}_+$ and $\nu_1 \leq \nu_2$.
\end{assumption}

The above assumptions means that the longest possible planning horizon is known, that it is sufficiently long, and that it can actually be realized, i.e., that it has a nonzero probability.
Moreover, under mild regularity conditions on the probability distribution of $(\bar{\mfx}, \mathscr{N})$, the formulation in \eqref{eq:stochastic_forward_problem} defines joint probability distributions for $(\mfx_{1:\mathscr{N}}^\star,\mathscr{N},\bar{\mfx})$ and $(\mfu_{1:\mathscr{N}-1}^{\star},\mathscr{N},\bar{\mfx})$ (cf.~\citep[Thm.~5.3]{kallenberg1997foundations}). From now on, in order to ease the notation, we omit the ``star" in the superscript in the agents optimal states and control inputs.

Next, it is well-known that a linear-quadratic tracking problem can be rewritten as a linear-quadratic problem with a time-varying linear cost term. To this end,
by introducing $\trueq_t := -\trueQ x_{t}^{r}$, for $t  = 1, \ldots, \nu_2$, and omitting the constant terms, the objective function \eqref{eq:stochastic_forward_problem_cost} can be re-written as
\begin{align}
&\mE_{\mfw_{\nu_2-N+1:\nu_2-1}} \Big[\frac{1}{2} \mfx_{\nu_2}^T\bar{Q}\mfx_{\nu_2}+\bar{q}_{\nu_2}^T\mfx_{\nu_2}\nonumber\\
&+\sum_{t = \nu_2 - N+1}^{\nu_2-1} \frac{1}{2} \mfx_{t}\bar{Q}\mfx_t+\bar{q}_t^T\mfx_{t}+  \frac{1}{2}\|\mfu_{t}\|^2\Big].
\label{eq:stochastic_forward_problem_obj_rewrite}
\end{align}

The IOC problem we consider in this work is: \emph{given observations of trajectories generated as solutions to the forward problem \eqref{eq:stochastic_forward_problem}, form a statistically consistent estimate of the parameter $\trueQ$ in the objective function.} In order for this to be a well-posed problem, we need a number of assumptions.

\begin{assumption}[System dynamics]\label{ass:AB}
The system $(A,B)$ is controllable, $B$ has full column rank, and $A$ is invertible.
\end{assumption}
Controllability is a standard assumption, and the fact that $B$ has full column rank means that there are no redundant control signals (in particular, the system is not over-actuated).
The last assumption is motivated by the fact that a discrete-time system is often sampled from a continuous-time system, and the discretized $A$ matrix is always invertible. In addition, we also make the following assumptions regarding the random elements and the parameter that we want to estimate.

\begin{assumption}[I.I.D random variables]\label{ass:IID}
The random elements $(\bar{\mfx},\mathscr{N})$ and $\{\mfw_t\}_{t=1}^\infty$ are all independent. In addition, the random vectors $\{\mfw_t\}_{t=1}^\infty$ are identically distributed (I.I.D), $\mE[\mfw_t]=0$, $\forall t$, and $\cov(\mfw_t,\mfw_t)=\Sigma_w \succ 0$, where $\Sigma_w$ is a priori known and $\|\Sigma_w\|_F<\infty$.
\end{assumption}

\begin{assumption}[Persistent excitation]\label{ass:persistent_excitation}
It holds that $\cov_{\bar{\mfx}|\mathscr{N}=\nu_2}(\bar{\mfx},\bar{\mfx})\succ 0$ and $\mE[\|\bar{\mfx}\|^2]<\infty$.
\end{assumption}

\begin{assumption}[Bounded parameter]\label{ass:bounded_parameter}
The unknown ``true" parameter $\bar{Q}$ that governs the agents tracking behavior lives in the compact set
\[
\mathbb{G}(\varphi) :=\left\{\bar{Q}\in\mS^n_+ \mid \| \bar{Q} \|_F\le \varphi\right\},
\]
for some (potentially unknown) $0 < \varphi < \infty$.
\end{assumption}

\section{Main results}
The optimal control signal that solves \eqref{eq:stochastic_forward_problem} turns out to have the same form as in the deterministic case, i.e., without the process noise $\mfw_t$ in the dynamics \eqref{eq:stochastic_forward_problem_dynamics}. This can be seen by following a derivation similar to the one in \citep[Sec.~4.1]{bertsekas2000dynamic}. In particular, conditioned on $\mathscr{N} = \nu_2$ we have that for $t=1:\nu_2-1$, the optimal control signal is given by
\begin{equation}\label{eq:form_of-stochastic_u}
\mfu_t = -(B^T \bar{P}_{t+1}B + I)^{-1}\left(B^T\bar{P}_{t+1}A \mfx_t + B^T \bar{\eta}_{t+1}\right),
\end{equation}
where the sequence of $\bar{P}_{1:N}$ and $\bar{\eta}_{1:N}$ are generated by the Riccati iterations
\begin{subequations}\label{eq:generalized_riccati_iterations}
\begin{align}
 \bar{P}_{N} & = \trueQ, \label{eq:generalized_riccati_1}\\
\bar{P}_t & =  A^T \bar{P}_{t+1} A  + \trueQ \nonumber \\
 & \; - A^T \bar{P}_{t+1} B (B^T \bar{P}_{t+1} B  + I)^{-1}B^T \bar{P}_{t+1} A, \nonumber\\
 &\qquad t=1:N-1; \label{eq:generalized_riccati_2}\\
\bar{ \eta}_N &= \trueq_N,\label{eq:generalized_riccati_3}\\
 \bar{\eta}_t &= \left(A-B(B^T\bar{P}_{t+1}B+I)^{-1} B^T\bar{P}_{t+1}A\right)^T\bar{\eta}_{t+1}\nonumber\\
 &\:+ \trueq_t,\quad t=1:N-1.\label{eq:generalized_riccati_4}
\end{align}
\end{subequations}

In view of the special form of the linear-quadratic optimal control, we propose to solve the following convex optimization problem to reconstruct the ``true" parameter $\bar{Q}$:
\begin{subequations}\label{eq:stochastic_IOC_opt_pro}
\begin{align}
\min_{\substack{Q \in \mathbb{G}(\varphi)\\ \{ q_t \in \mR^n \}_{t=1:\nu_2} \\ \{P_{t} \in \mathbb{S}^n_+ \}_{t = 1:\nu_2},\\ \{ \eta_{t} \in\mathbb{R}^{n} \}_{t = 1:\nu_2},\\ \{ \xi_t \in \mR \}_{t=1:\nu_2-1}}} & \; \Psi(Q, q_{1:\nu_2},  P_{1:\nu_2}, \eta_{1:\nu_2},  \xi_{1:\nu_2-1})\nonumber\\
\st & \; P_{\nu_2} = Q, \quad \eta_{\nu_2}=q_{\nu_2}, \label{eq:stochastic_IOC_opt_pro_first_const}\\
& \; q_t = -Qx_{t}^{r}, \quad t=1:\nu_2\\
& 
H_t:=\begin{bmatrix}
\mathfrak{R}_t &\mathfrak{S}_t &g_t\\
\mathfrak{S}_t^{T}&A^TP_{t+1}A+Q-P_t &\beta_t\\
g_t^{T} &\beta_t^{T} &\xi_t
\end{bmatrix} \succeq 0,\nonumber\\
&\qquad \; t=1:\nu_2-1. \label{eq:stochastic_IOC_opt_pro_last_const}
\end{align}
where $\mathfrak{R}_t:=B^TP_{t+1}B+I$, $\mathfrak{S}_t := B^TP_{t+1}A$, $g_t := B^T\eta_{t+1}$, $\beta_t := q_t + A^T\eta_{t+1}-\eta_t$, and
\begin{align}
& \Psi(Q, q_{1:\nu_2}, P_{1:\nu_2}, \eta_{1:\nu_2}, \xi_{1:\nu_2-1}) :=\mE_{\mfx_{1:\nu_2},\mathscr{N}}\Big[\frac{1}{2}\mfx_{\nu_2}^TP_{\nu_2}\mfx_{\nu_2} \nonumber\\
&\; +\eta_{\nu_2}^T\mfx_{\nu_2} - \frac{1}{2}\mfx_{\nu_2-\mathscr{N}+1}^TP_{\nu_2-\mathscr{N}+1}\mfx_{\nu_2-\mathscr{N}+1} \nonumber\\
&\; -\eta_{\nu_2-\mathscr{N}+1}^T\mfx_{\nu_2-\mathscr{N}+1} + \sum_{t=\nu_2-\mathscr{N}+1}^{\nu_2-1}\Big(\frac{1}{2}\xi_t + \frac{1}{2} \mfx_t^TQ\mfx_t  \nonumber \\
&\; + q_t^T\mfx_t - \frac{1}{2}\tr(B^TP_{t+1}B\Sigma_w)\Big)\Big].
\label{eq:stochastic_IOC_objective_function}
\end{align}
\end{subequations}

Before we proceed, we have the following intermediate result that will be useful in the analysis.
\begin{lemma}\label{lem:stochastic_persistent_excitation}
Let $\tilde{\mfx}_t:=[\mfx_t^T,1]^T$ and $\bar{\tilde{\mfx}}_{t+1}:=[(A\mfx_t+B\mfu_t)^T,1]^T$.
Under Assumptions~\ref{ass:planning_horizon} and \ref{ass:persistent_excitation}, it holds that $\mE_{\mfx_t|\mathscr{N}=\nu_2}[\tilde{\mfx}_t\tilde{\mfx}_t^T]\succ 0$, $\mE_{\mfx_t|\mathscr{N}=\nu_2}[\bar{\tilde{\mfx}}_{t+1}\bar{\tilde{\mfx}}_{t+1}^T]\succ 0$, and $\mE[\|\tilde{\mfx}_t\|^2]<\infty$ for all $ t=1:\nu_2$.
\end{lemma}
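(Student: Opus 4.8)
The plan is to reduce all three statements to facts about the conditional covariance of the optimal state and then exploit the affine closed-loop structure of the optimal trajectory. For any random vector $\mfz$ with finite second moment, the matrix $\mE\left[ [\mfz^T,1]^T [\mfz^T,1] \right]$ has, by the Schur complement with respect to its trailing unit entry, the value $\mE[\mfz\mfz^T]-\mE[\mfz]\mE[\mfz]^T=\cov(\mfz,\mfz)$, and the whole matrix is positive definite iff this complement is. Hence the first claim is equivalent to $\cov_{\mfx_t|\mathscr{N}=\nu_2}(\mfx_t,\mfx_t)\succ 0$, and the second to $\cov_{\mfx_t|\mathscr{N}=\nu_2}(A\mfx_t+B\mfu_t,\,A\mfx_t+B\mfu_t)\succ 0$; it therefore suffices to prove these two covariance inequalities, plus finiteness of the relevant second moments.

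Next I would write down the closed-loop dynamics. Conditioned on $\mathscr{N}=\nu_2$ the horizon and the true parameters $\trueQ,\trueq$ are fixed, so the Riccati quantities $\bar P_{t+1},\bar\eta_{t+1}$ and the control law \eqref{eq:form_of-stochastic_u} are deterministic; writing $\mfu_t=-K_t\mfx_t-k_t$ with $K_t=(B^T\bar P_{t+1}B+I)^{-1}B^T\bar P_{t+1}A$ turns the dynamics \eqref{eq:stochastic_forward_problem_dynamics} into $\mfx_{t+1}=\Acl^{(t)}\mfx_t-Bk_t+B\mfw_t$, where $\Acl^{(t)}:=A-BK_t$. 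Unrolling from $\mfx_1=\bar{\mfx}$ gives $\mfx_t=\Phi_{t,1}\bar{\mfx}+c_t+\sum_{s=1}^{t-1}\Phi_{t,s+1}B\mfw_s$, where $\Phi_{t,s}$ is the product of closed-loop matrices from $s$ to $t$ and $c_t$ is a deterministic drift. Since, by Assumption~\ref{ass:IID}, $\bar{\mfx}$ (conditioned on $\mathscr{N}=\nu_2$) is independent of the i.i.d.\ noises, the conditional covariance splits as $\cov_{\mfx_t|\mathscr{N}=\nu_2}(\mfx_t,\mfx_t)=\Phi_{t,1}\,\cov_{\bar{\mfx}|\mathscr{N}=\nu_2}(\bar{\mfx},\bar{\mfx})\,\Phi_{t,1}^T+\sum_{s=1}^{t-1}\Phi_{t,s+1}B\Sigma_wB^T\Phi_{t,s+1}^T$.

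The crux, and the step I expect to be the main obstacle, is to show that every closed-loop matrix $\Acl^{(t)}$, hence $\Phi_{t,1}$, is invertible. I would use the push-through identity $B(B^T\bar P_{t+1}B+I)^{-1}B^T\bar P_{t+1}=(I+BB^T\bar P_{t+1})^{-1}BB^T\bar P_{t+1}$ to obtain the compact form $\Acl^{(t)}=(I+BB^T\bar P_{t+1})^{-1}A$. Because $\bar P_{t+1}\succeq 0$ (standard for the LQ recursion \eqref{eq:generalized_riccati_iterations}), the factor $I+BB^T\bar P_{t+1}$ is invertible: if $(I+BB^T\bar P_{t+1})v=0$ then left-multiplying $v=-BB^T\bar P_{t+1}v$ by $v^T\bar P_{t+1}$ gives $v^T\bar P_{t+1}v=-\|B^T\bar P_{t+1}v\|^2$, forcing $B^T\bar P_{t+1}v=0$ and hence $v=-BB^T\bar P_{t+1}v=0$; together with $A$ invertible (Assumption~\ref{ass:AB}) this makes $\Acl^{(t)}$, and the product $\Phi_{t,1}$, invertible. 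Feeding this into the covariance decomposition, $\Phi_{t,1}\cov_{\bar{\mfx}|\mathscr{N}=\nu_2}(\bar{\mfx},\bar{\mfx})\Phi_{t,1}^T\succ 0$ by Assumption~\ref{ass:persistent_excitation}, while the noise sum is positive semidefinite, so $\cov_{\mfx_t|\mathscr{N}=\nu_2}(\mfx_t,\mfx_t)\succ 0$, giving the first claim. For the second, $A\mfx_t+B\mfu_t=\Acl^{(t)}\mfx_t-Bk_t$, so its conditional covariance equals $\Acl^{(t)}\cov_{\mfx_t|\mathscr{N}=\nu_2}(\mfx_t,\mfx_t)(\Acl^{(t)})^T\succ 0$ by invertibility of $\Acl^{(t)}$.

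Finally, for $\mE[\|\tilde{\mfx}_t\|^2]=\mE[\|\mfx_t\|^2]+1<\infty$ I would condition on the finitely many values of $\mathscr{N}$, writing $\mE[\|\mfx_t\|^2]=\sum_{N=\nu_1}^{\nu_2}\mP(\mathscr{N}=N)\,\mE[\|\mfx_t\|^2\mid\mathscr{N}=N]$. For each fixed $N$ the state is again affine in $\bar{\mfx}$ and the noises, with coefficient matrices and drift built from the finite Riccati recursion (hence bounded, using $\trueQ\in\mathbb{G}(\varphi)$ from Assumption~\ref{ass:bounded_parameter} and the bounded reference signal), so every conditional second moment is finite because $\mE[\|\bar{\mfx}\|^2]<\infty$ (Assumption~\ref{ass:persistent_excitation}) and $\mE[\|\mfw_s\|^2]=\tr(\Sigma_w)<\infty$ (Assumption~\ref{ass:IID}). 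A finite sum of finite terms is finite, completing the argument.
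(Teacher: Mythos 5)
The paper itself omits the proof of this lemma (``due to space limitation''), so there is no in-paper argument to compare against; judged on its own, your proof is correct and essentially complete. The reduction via the Schur complement of the trailing $1$ (positive definiteness of the second-moment matrix of $[\mfz^T,1]^T$ being equivalent to $\cov(\mfz,\mfz)\succ 0$) is sound; the push-through identity giving $\Acl^{(t)}=(I+BB^T\bar{P}_{t+1})^{-1}A$ is applied correctly; and your kernel argument for invertibility of $I+BB^T\bar{P}_{t+1}$ (using $\bar{P}_{t+1}\succeq 0$, which indeed holds for the Riccati recursion started at $\bar{P}_{\nu_2}=\trueQ\succeq 0$) is right. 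Combined with invertibility of $A$, this makes every closed-loop transition matrix, and hence $\Phi_{t,1}$, nonsingular, so the initial-state term alone forces $\cov_{\mfx_t|\mathscr{N}=\nu_2}(\mfx_t,\mfx_t)\succ 0$ by Assumption~\ref{ass:persistent_excitation}; the second claim then follows because $A\mfx_t+B\mfu_t$ is an invertible affine image of $\mfx_t$, and the moment bound by conditioning on the finitely many values of $\mathscr{N}$ is also fine.

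Two remarks. First, your proof invokes Assumptions~\ref{ass:AB} and \ref{ass:IID} (invertibility of $A$; independence of $\{\mfw_t\}$ from $(\bar{\mfx},\mathscr{N})$ and finiteness of $\Sigma_w$) in addition to the Assumptions~\ref{ass:planning_horizon} and \ref{ass:persistent_excitation} cited in the lemma statement. This is not a defect of your argument but a looseness in the lemma as stated: if $A$ were singular, then $\Acl^{(t)}=(I+BB^T\bar{P}_{t+1})^{-1}A$ would be singular, and $\cov_{\mfx_t|\mathscr{N}=\nu_2}(A\mfx_t+B\mfu_t,\,A\mfx_t+B\mfu_t)=\Acl^{(t)}\cov_{\mfx_t|\mathscr{N}=\nu_2}(\mfx_t,\mfx_t)(\Acl^{(t)})^T$ could not be positive definite, so the second claim genuinely needs Assumption~\ref{ass:AB}; likewise the clean splitting of the conditional covariance into an initial-state part and a noise part needs the independence in Assumption~\ref{ass:IID}. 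Second, two cosmetic points: the appeal to Assumption~\ref{ass:bounded_parameter} in the last step is unnecessary (for a fixed $\trueQ\in\mS^n_+$ the finitely many Riccati matrices are automatically finite), and the claim on $\bar{\tilde{\mfx}}_{t+1}$ only makes sense for $t=1:\nu_2-1$ since $\mfu_{\nu_2}$ is not defined---your argument implicitly covers exactly that range.
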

\begin{proof}
Due to space limitation, the proof is omitted.
\end{proof}

\begin{theorem}\label{thm:stochastic_ioc_unique_opt_sol}
Let $(\bar{Q},\bar{q}_{1:\nu_2})$ be the ``true" parameters of the stochastic linear-quadratic regulator that governs the agent, and let $\bar{P}_{1:\nu_2}$, $\bar{\eta}_{1:\nu_2}$ be generated by the corresponding Riccati iterations \eqref{eq:generalized_riccati_iterations}. Under Assumptions~\ref{ass:planning_horizon}, \ref{ass:AB}, \ref{ass:IID}, \ref{ass:persistent_excitation} and \ref{ass:bounded_parameter}, $(\bar{Q},\bar{q}_{1:\nu_2},\bar{P}_{1:\nu_2},\bar{\eta}_{1:\nu_2})$ is the unique optimal solution to \eqref{eq:stochastic_IOC_opt_pro}.
\end{theorem}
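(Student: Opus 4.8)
The plan is to transform \eqref{eq:stochastic_IOC_opt_pro} into the problem of driving a family of nonnegative Bellman residuals to zero, to check that the true parameters do exactly that, and then to argue uniqueness via persistent excitation and controllability. Concretely, I would write $V_t(x):=\tfrac12 x^T P_t x + \eta_t^T x$ and observe that the constraint $H_t\succeq 0$ is equivalent to the nonnegativity for all $(u,x)$ of the quadratic form
\[
Q_t(u,x):=\tfrac12\,[u^T,\,x^T,\,1]\,H_t\,[u^T,\,x^T,\,1]^T,
\]
which by the definitions of $\mathfrak{R}_t,\mathfrak{S}_t,g_t,\beta_t$ equals the one-step residual $\tfrac12\|u\|^2+\tfrac12 x^T Q x+q_t^T x+V_{t+1}(Ax+Bu)-V_t(x)+\tfrac12\xi_t$. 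Telescoping $V_{t+1}(\mfx_{t+1})-V_t(\mfx_t)$ pathwise over $t=\nu_2-\mathscr{N}+1:\nu_2-1$ and using Assumption~\ref{ass:IID} through $\mE_{\mfw_t}[V_{t+1}(A\mfx_t+B\mfu_t+B\mfw_t)]=V_{t+1}(A\mfx_t+B\mfu_t)+\tfrac12\tr(B^T P_{t+1}B\Sigma_w)$ (for fixed $\mfx_t,\mfu_t$), the trace terms cancel against the ones subtracted in \eqref{eq:stochastic_IOC_objective_function}, and I expect to obtain
\[
\Psi=\mE\Big[\sum_{t=\nu_2-\mathscr{N}+1}^{\nu_2-1}Q_t(\mfu_t,\mfx_t)\Big]-\mE\Big[\sum_{t=\nu_2-\mathscr{N}+1}^{\nu_2-1}\tfrac12\|\mfu_t\|^2\Big],
\]
where the last term is independent of the decision variables. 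Since $H_t\succeq 0$ makes each $Q_t(\mfu_t,\mfx_t)\ge 0$, this is a constant lower bound that is attained iff $Q_t(\mfu_t,\mfx_t)=0$ almost surely for all relevant $t$.

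Next I would verify that $(\trueQ,\trueq_{1:\nu_2},\bar P_{1:\nu_2},\bar\eta_{1:\nu_2})$ attains this bound. The boundary and linear constraints in \eqref{eq:stochastic_IOC_opt_pro} hold by the definitions of $\bar P,\bar\eta,\trueq$; and the Riccati identities \eqref{eq:generalized_riccati_iterations}, together with the choice $\bar\xi_t=\bar\eta_{t+1}^T B(B^T\bar P_{t+1}B+I)^{-1}B^T\bar\eta_{t+1}$, make the $x$-quadratic, $x$-linear, and constant parts of $\min_u Q_t(u,x)$ all vanish, so $\min_u Q_t(u,x)=0$ and hence $\bar H_t\succeq 0$. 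Because the observed control \eqref{eq:form_of-stochastic_u} is precisely the minimizer of $Q_t(\cdot,\mfx_t)$ at the true parameters, $Q_t(\mfu_t,\mfx_t)=0$, so the true parameters are optimal.

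For uniqueness, any optimizer forces $\mE[\sum_t Q_t(\mfu_t,\mfx_t)]=0$. Conditioning on $\mathscr{N}=\nu_2$, which has positive probability by Assumption~\ref{ass:planning_horizon}, and using nonnegativity, I get $Q_t(\mfu_t,\mfx_t)=0$ a.s., hence $[\mfu_t^T,\mfx_t^T,1]^T\in\ker H_t$ a.s.\ for all $t=1:\nu_2-1$. Writing the true affine law \eqref{eq:form_of-stochastic_u} as $[\mfu_t^T,\mfx_t^T,1]^T=\bar M_t\tilde{\mfx}_t$, Lemma~\ref{lem:stochastic_persistent_excitation} ($\mE_{\mfx_t|\mathscr{N}=\nu_2}[\tilde{\mfx}_t\tilde{\mfx}_t^T]\succ 0$) upgrades this to $H_t\bar M_t=0$, so $\range\bar M_t\subseteq\ker H_t$. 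Since $\mathfrak{R}_t=B^T P_{t+1}B+I\succ 0$ contributes rank $m$ while $\dim\range\bar M_t=n+1$, it follows that $\rank H_t=m$ and the Schur complement of $\mathfrak{R}_t$ in $H_t$ vanishes. The vanishing Schur complement is exactly the Riccati recursion for $(P,\eta)$ plus the formula for $\xi_t$ (so these are determined by $Q$), and comparing the two kernel representations of $\ker H_t$ shows the induced feedback gain equals the true one, equivalently $B^T(P_{t+1}-\bar P_{t+1})=0$ for $t=1:\nu_2-1$.

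Finally I would close the identifiability. In closed-loop form with the common gain $K_t$ and $\Phi_t:=A+BK_t$, the difference $\Delta P_t:=P_t-\bar P_t$ satisfies the linear recursion $\Delta P_t=\Delta Q+\Phi_t^T\Delta P_{t+1}\Phi_t$ with $\Delta P_{\nu_2}=\Delta Q:=Q-\trueQ$. Feeding the relations $B^T\Delta P_s=0$ ($s=2:\nu_2$) into this recursion and using that $A$ is invertible (Assumption~\ref{ass:AB}), a backward induction should yield $\Delta Q\,A^k B=0$ for $k=0:\nu_2-2$. Since $\nu_2\ge n+1$ (Assumption~\ref{ass:planning_horizon}), this covers $k=0:n-1$, so $\Delta Q\,[B,AB,\ldots,A^{n-1}B]=0$, and controllability (Assumption~\ref{ass:AB}) forces $\Delta Q=0$, i.e.\ $Q=\trueQ$; then $P=\bar P$, $\eta=\bar\eta$, $q=\trueq$, $\xi=\bar\xi$ follow. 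I expect the main obstacle to be precisely this last step—converting the per-step gain matching $B^T\Delta P_s=0$ into the tower of conditions $\Delta Q\,A^k B=0$ and invoking controllability—together with justifying rigorously the kernel/Schur-complement characterization of $H_t$ (in particular that the induced gains, and not just a one-step relation, must coincide) in the previous paragraph.
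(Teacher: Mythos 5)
Your proposal is correct and follows essentially the same route as the paper's proof: the same decomposition of $\Psi$ into nonnegative one-step residuals $\tfrac{1}{2}[\mfu_t^T,\mfx_t^T,1]H_t[\mfu_t^T,\mfx_t^T,1]^T$ minus the parameter-independent terms $\tfrac{1}{2}\mE\big[\|\mfu_t\|^2\big]$, the same attainment argument via the Riccati iterations (with the same choice of $\bar{\xi}_t$), and the same uniqueness scheme --- condition on $\{\mathscr{N}=\nu_2\}$, use Lemma~\ref{lem:stochastic_persistent_excitation} to force $H_t\backslash\mathfrak{R}_t=0$ and $B^T\begin{bmatrix}\Delta P_{t+1} & \Delta\eta_{t+1}\end{bmatrix}=0$, then conclude $\Delta Q=0$. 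Your only deviations are local and benign: you obtain $H_t\backslash\mathfrak{R}_t=0$ and the gain matching via a rank/kernel count on $H_t$, whereas the paper uses trace identities and substitutes the true control law \eqref{eq:form_of-stochastic_u} into the vanishing residual (the paper's route sidesteps the closed-loop-invertibility argument that your ``equivalently $B^T\Delta P_{t+1}=0$'' step requires, though that step is sound because $A-B(B^T\bar{P}_{t+1}B+I)^{-1}B^T\bar{P}_{t+1}A=(I+BB^T\bar{P}_{t+1})^{-1}A$ is invertible under Assumption~\ref{ass:AB}); and you carry out explicitly the final backward induction yielding $\Delta Q A^kB=0$ for $k=0{:}n-1$ and hence $\Delta Q=0$ by controllability, a step the paper delegates to the argument of \citep[Thm.~2.1]{zhang2019inverse}.
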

\begin{proof}
To ease the notation, in the proof we omit the arguments of the objective function and simply write $\Psi(\cdot)$. Moreover, to prove the statement of the theorem we show  i) that $\Psi(\cdot)$ is bounded from below on the feasible region, ii) that the true parameter values attain this bound, and iii) that the solution is unique.

To this end, first note that \eqref{eq:stochastic_IOC_objective_function} can be written as
\begin{align}
&\Psi(\cdot)=\sum_{N = \nu_1}^{\nu_2}\mP(\mathscr{N}=N)\mE_{\mfx_{\nu_2-N+1:\nu_2}|\mathscr{N}=N}\left[\psi_N(\cdot) \right],\label{eq:stochastic_ioc_obj_rewriting}
\end{align}
where $\psi_N(\cdot) = \sum_{t=\nu_2-N+1}^{\nu_2-1}\psi_{t,N}(\cdot)$ and
\begin{align*}
\psi_{t,N}(\cdot) = & \; \frac{1}{2}\mfx_{t+1}^TP_{t+1}\mfx_{t+1} \! +\! \eta_{t+1}^T\mfx_{t+1} \! - \! \frac{1}{2}\mfx_{t}^TP_{t}\mfx_{t} \! - \! \eta_{t}^T\mfx_{t}\\
&\; +\! \frac{1}{2}\xi_t \!+\! \frac{1}{2} \mfx_t^TQ\mfx_t \!+\! q_t^T\mfx_t \!-\! \frac{1}{2}\tr(B^TP_{t+1}B\Sigma_w),
\end{align*}
and where the expectation in \eqref{eq:stochastic_ioc_obj_rewriting} is only taken over time points $\nu_2-N+1:\nu_2$ since the probability measure of $\mP(\mfx_{1:\nu_2-N}|\mathscr{N}=N)$ is marginalized out.
In view of the expression of $\psi_{t,N}(\cdot)$ and \eqref{eq:stochastic_forward_problem_dynamics}, we can further write \eqref{eq:stochastic_ioc_obj_rewriting} as
\begin{align}
&\Psi(\cdot)= \sum_{N = \nu_1}^{\nu_2}\mP(\mathscr{N}=N)\sum_{t=\nu_2-N+1}^{\nu_2-1}\mE_{\mfx_{t:t+1}|\mathscr{N}=N}\left[\psi_{t,N}(\cdot) \right] \nonumber\\
&=\sum_{N = \nu_1}^{\nu_2}\mP(\mathscr{N}=N) \sum_{t=\nu_2-N+1}^{\nu_2-1}\mE_{\mfw_t,\mfx_t|\mathscr{N}=N}\left[\psi_{t,N}(\cdot) \right] \label{eq:psi_probability_times_expectation}
\end{align}
where
\begin{align*}
&\mE_{\mfw_t,\mfx_t| \mathscr{N}=N} \! \left[\psi_{t,N}(\cdot) \right] = \! \mE_{\mfx_t|\mathscr{N}=N} \! \Big[\! \mE_{\mfw_t} \! \Big[\frac{1}{2}\left(A\mfx_t+B(\mfu_t+\mfw_t)\right)^T\\
&\times P_{t+1}\left(A\mfx_t+B(\mfu_t+\mfw_t)\right)+\eta_{t+1}^T\left(A\mfx_t+B(\mfu_t+\mfw_t)\right)\\
&-\frac{1}{2}\mfx_{t}^TP_{t}\mfx_{t} -\eta_{t}^T\mfx_{t}+\frac{1}{2}\xi_t + \frac{1}{2} \mfx_t^TQ\mfx_t + q_t^T\mfx_t\\
& -\frac{1}{2}\tr(B^TP_{t+1}B\Sigma_w)\Big]\Big],
\end{align*}
since by Assumption~\ref{ass:IID} the noise $\{\mfw_t\}_{t = 1}^{\infty}$ is independent of any other stochastic element.  Now, using Assumption~\ref{ass:IID} and the cyclic permutation property of the matrix trace operator, we know that
\begin{align*}
&\mE_{\mfw_t}[\mfw_t^TB^TP_{t+1}B\mfw_t]= \mE_{\mfw_t}[\tr(\mfw_t^TB^TP_{t+1}B\mfw_t)]\\
&=  \mE_{\mfw_t}[\tr(B^TP_{t+1}B\mfw_t\mfw_t^T)] =\tr(B^TP_{t+1}B\Sigma_w)
\end{align*}
which together with $\mE_{\mfw_t}[\mfw_t] = 0$ implies that
\begin{align*}
& \mE_{\mfw_t,\mfx_t| \mathscr{N}=N}\left[\psi_{t,N}(\cdot) \right] = \mE_{\mfx_t|\mathscr{N}=N}\Big[\frac{1}{2}\left(A\mfx_t+B\mfu_t\right)^T \\
& \times P_{t+1}\left(A\mfx_t+B\mfu_t\right)  +\eta_{t+1}^T\left(A\mfx_t+B\mfu_t\right) \\
& -\frac{1}{2}\mfx_{t}^TP_{t}\mfx_{t} -\eta_{t}^T\mfx_{t}+\frac{1}{2}\xi_t + \frac{1}{2} \mfx_t^TQ\mfx_t + q_t^T\mfx_t \Big]\\
&=\mE_{\mfx_t|\mathscr{N}=N}\Big[\frac{1}{2}
\begin{bmatrix}
\mfu_t^T &\mfx_t^T &1
\end{bmatrix}
H_t
\begin{bmatrix}
\mfu_t\\\mfx_t \\1
\end{bmatrix}-\frac{1}{2}\|\mfu_t\|^2\Big],
\end{align*}
where $H_t$ is the matrix given in \eqref{eq:stochastic_IOC_opt_pro_last_const}.
Next, note that for any feasible $P_t$ it holds that $P_t\in \mS^n_+$,  and hence $\mathfrak{R}_t = B^TP_{t+1}B+I\succ 0$ which means that it is invertible. In particular,
this means that we can take the Schur complement of $H_t$ with respect to $\mathfrak{R}_t$, which gives
\begin{align}
&H_t\backslash \mathfrak{R}_t= \label{eq:Ht_Schur_comp} \\
&\begin{bmatrix}
A^TP_{t+1}A+Q-P_t &\beta_t\\
\beta_t^T &\xi_t
\end{bmatrix}  -\begin{bmatrix}
\mathfrak{S}_t^T\\g_t^T
\end{bmatrix}
\mathfrak{R}_t^{-1}
\begin{bmatrix}
\mathfrak{S}_t &g_t
\end{bmatrix}. \nonumber
\end{align}
The above expression can therefore be further rewritten as 
\begin{align}
&\mE_{\mfw_t,\mfx_t| \mathscr{N}=N}\left[\psi_{t,N}(\cdot) \right] = \mE_{\mfx_t|\mathscr{N}=N}\Big[\frac{1}{2}
\begin{bmatrix}
\mfu_t^T &\mfx_t^T &1
\end{bmatrix}G_t\times\nonumber\\
&\quad\begin{bmatrix}
\mathfrak{R}_t\\&H_t\backslash \mathfrak{R}_t
\end{bmatrix}G_t^T
\begin{bmatrix}
\mfu_t\\\mfx_t \\1
\end{bmatrix}-\frac{1}{2}\|\mfu_t\|^2\Big],\label{eq:objective_function_schur_complement}
\end{align}
where
\[
G_t = \begin{bmatrix}
I\\
\mathfrak{S}_t^T\mathfrak{R}_t^{-1} &I\\
g_t^T\mathfrak{R}_t^{-1} & & 1
\end{bmatrix}.
\]
Now, using the notation $\bar{\tilde{\mfx}}_{t+1}$ (introduced in Lemma \ref{lem:stochastic_persistent_excitation}), we can expand \eqref{eq:objective_function_schur_complement} and therefore rewrite \eqref{eq:psi_probability_times_expectation} as
\begin{align}
&\Psi(\cdot) =\sum_{N = \nu_1}^{\nu_2}\Big\{\mP(\mathscr{N}=N)\sum_{t=\nu_2-N+1}^{\nu_2-1}\mE_{\mfx_t|\mathscr{N}=N}\!\Big[\! -\frac{1}{2}\|\mfu_t\|^2 \nonumber \\
&+ \! \frac{1}{2} \bar{\tilde{\mfx}}_t^T(H_t\backslash \mathfrak{R}_t) \bar{\tilde{\mfx}}_t \!+\! \frac{1}{2}\|\mathfrak{R}_t^{1/2}(\mfu_t \!+\! \mathfrak{R}_t^{-1}\mathfrak{S}_t\mfx_t \!+\! \mathfrak{R}_t^{-1}g_t)\|^2.\label{eq:ioc_objective_function_norm_quadratic}
\end{align}
Finally, note that since $\mathfrak{R}_t \succ 0$, the Schur complement $H_t\backslash \mathfrak{R}_t\succeq 0$ \citep[p.~495]{horn2013matrix}, and by also using \eqref{eq:stochastic_control_stand_still} it hence follows that
\begin{align}
\Psi(\cdot)&\ge \sum_{N = \nu_1}^{\nu_2}\Big\{\mP(\mathscr{N}=N)\sum_{t=1}^{\nu_2-1}\mE_{\mfx_t|\mathscr{N}=N}\Big[-\frac{1}{2}\|\mfu_t\|^2 \Big]\Big\} \nonumber\\
&=\sum_{t=1}^{\nu_2-1}\mE_{\mfx_t}\Big[-\frac{1}{2}\|\mfu_t\|^2\Big], \label{eq:proof_stoch_unique}
\end{align}
where the inequality follows by simply  adding the zeros $\sum_{t=1}^{\nu_2-N}\mE_{\mfx_t|\mathscr{N}=N}\Big[-\frac{1}{2}\|\mfu_t\|^2\Big]$ and removing all other nonnegative terms. This gives a lower bound for the optimal value.

Next, we show that the lower bound is actually attained by the ``true" $(\bar{Q},\bar{q}_{1:\nu_2},\bar{P}_{1:\nu_2},\bar{\eta}_{1:\nu_2})$.
First note that the Riccati iteration \eqref{eq:generalized_riccati_iterations} with the ``true" $\trueQ$ and $\trueq$ can be written as
\begin{subequations}
\begin{align}
&\begin{bmatrix}
\bar{P}_N &\bar{\eta}_N\\
\bar{\eta}_N^T &\bar{\xi}_N
\end{bmatrix}=
\begin{bmatrix}
\bar{Q} &\bar{q}\\
\bar{q}^T &0
\end{bmatrix},\label{eq:LMI_terminal}\\
&\mathbf{0} = \begin{bmatrix}
A^T\bar{P}_{t+1}A+\bar{Q}-\bar{P}_t &\bar{q}+A^T\bar{\eta}_{t+1}-\bar{\eta}_t\\
q^T+\bar{\eta}_{t+1}^TA-\bar{\eta}_t^T &\bar{\xi}_t
\end{bmatrix}  \label{eq:LMI_schur_complement} \\
&-\!\begin{bmatrix}
A^T\bar{P}_{t+1}B\\\bar{\eta}_{t+1}^TB
\end{bmatrix}
(B^T\bar{P}_{t+1}B+I)^{-1}
\begin{bmatrix}
B^T\bar{P}_{t+1}A &B^T\bar{\eta}_{t+1}
\end{bmatrix}\!,\nonumber
\end{align}
\end{subequations}
with a corresponding appropriate iteration for $\bar{\xi}_t$.  Moreover, as already observed earlier, it holds that $\bar{\mathfrak{R}}_t = B^T\bar{P}_{t+1}B+I\succ 0$ for all $t=1:N-1$ since $\bar{P}_t\in\mS_+^n$. 
Now, comparing \eqref{eq:Ht_Schur_comp} and \eqref{eq:LMI_schur_complement} we see that the latter is the Schur complement of $\bar{H}_t$ with respect to the top left corner, i.e., $\bar{H}_t\backslash \bar{\mathfrak{R}}_t$, and hence we have that $\bar{H}_t\backslash \bar{\mathfrak{R}}_t =0\succeq 0$. 
Therefore, by \citep[p.~495]{horn2013matrix}, this implies that $\bar{H}_t\succeq 0,\forall t=1:N-1$. Together with \eqref{eq:LMI_terminal} this implies that $\bar{Q}$, $\bar{q}$ and $\bar{P}_{1:N}$, $\bar{\eta}_{1:N}$ generated by Riccati iteration \eqref{eq:generalized_riccati_iterations} is a feasible solution to \eqref{eq:stochastic_IOC_opt_pro}.
Further, in view of \eqref{eq:ioc_objective_function_norm_quadratic} and the form of optimal control \eqref{eq:form_of-stochastic_u}, we can conclude that the lower bound $\sum_{t=1}^{\nu_2-1}\mE_{\mfx_t}\Big[-\frac{1}{2}\|\mfu_t\|^2\Big]$ is attained by the ``true" $(\bar{Q},\bar{q}_{1:\nu_2},\bar{P}_{1:\nu_2},\bar{\eta}_{1:\nu_2})$.

Finally, we show the uniqueness. To this end, suppose there exists another optimal solution $(Q^\prime=\trueQ+\Delta Q, \{q^\prime_t =\trueq_t - \Delta Qx_{t}^r \}_{t = 1}^{\nu_2},\{P_t^\prime=\bar{P}_t+\Delta P_t\}_{t=1}^{\nu_2},\{\eta_t^\prime=\bar{\eta}_t+\Delta\eta_t\}_{t = 1}^{\nu_2})$ that attains the same optimal value. In particular, this point must also attain the lower bound in \eqref{eq:proof_stoch_unique}. Moreover, by repeating the arguments for the inequality in \eqref{eq:proof_stoch_unique}, but removing all non-negative terms except for $\mathscr{N} = \nu_2$, we get
\begin{align*}
&\sum_{t=1}^{\nu_2-1}\mE_{\mfx_t}\Big[-\frac{1}{2}\|\mfu_t\|^2\Big]=\Psi(Q^\prime,q^\prime,P_{1:\nu_2}^\prime,\eta_{1:\nu_2}^\prime)\\
&\ge\sum_{t=1}^{\nu_2-1}\mE_{\mfx_t}\Big[-\frac{1}{2}\|\mfu_t\|^2\Big]+ \mP(\mathscr{N}=\nu_2)\sum_{t=1}^{\nu_2-1}\mE_{\mfx_t| \mathscr{N}=\nu_2}\Big[\\
&\frac{1}{2} \bar{\tilde{\mfx}}_t^T(H_t^{\prime}\backslash \mathfrak{R}_t^\prime)\bar{\tilde{\mfx}}_t \!+\! \frac{1}{2}\|(\mathfrak{R}_t^\prime)^{1/2}(\mfu_t \!+\! \mathfrak{R}_t^{\prime-1}\mathfrak{S}_t^\prime\mfx_t \!+\! \mathfrak{R}_t^{\prime-1}g_t^\prime)\|^2\Big].
\end{align*}
Since $H_t^{\prime}\backslash \mathfrak{R}_t^\prime\succeq 0,\forall t=1:\nu_2$, all terms in the last sum are nonnegative. Furthermore, since by assumption $\mP(\mathscr{N}=\nu_2)>0$, it must hold that all terms in the last sum are equal to zero. 
More precisely, we must have that
\begin{subequations}\label{eq:expectations_equal_zero}
\begin{align}
&\mE_{\mfx_t|\mathscr{N}=\nu_2}\left[\tilde{\bar{\mfx}}_t^T(H_t^{\prime}\backslash \mathfrak{R}_t^\prime)\bar{\tilde{\mfx}}_t\right]= \mE_{\mfx_t| \mathscr{N}=N}\left[ \tr((H_t^{\prime}\backslash \mathfrak{R}_t^\prime)\bar{\tilde{\mfx}}_t\bar{\tilde{\mfx}}_t^T)\right]\nonumber\\
&=  \tr\left((H_t^{\prime}\backslash \mathfrak{R}_t^\prime) \mE_{\mfx_t|\mathscr{N}=N}\left[\bar{\tilde{\mfx}}_t\bar{\tilde{\mfx}}_t^T\right] \right) = 0 \label{eq:stochastic_H_backslash_R} \\
&\mE_{\mfx_t|\mathscr{N}=\nu_2}\left[\|(\mathfrak{R}_t^\prime)^{1/2}(\mfu_t+\mathfrak{R}_t^{\prime-1}\mathfrak{S}_t^\prime\mfx_t+\mathfrak{R}_t^{\prime-1}g_t^\prime)\|^2\right]=0. \label{eq:stochastic_trajectory_match_2}
\end{align}
\end{subequations}
Since by Lemma \ref{lem:stochastic_persistent_excitation}, $\mE_{\mfx_t| \mathscr{N}=\nu_2}[\bar{\tilde{\mfx}}_t\bar{\tilde{\mfx}}_t^T]\succ 0$, in order for \eqref{eq:stochastic_H_backslash_R} to hold, we must have that $H_t^{\prime}\backslash \mathfrak{R}_t^\prime=0,\forall t=1:\nu_2$, which implies that the Riccati iterations \eqref{eq:generalized_riccati_iterations} must hold for all optimal solutions to \eqref{eq:stochastic_IOC_opt_pro} (cf.~\eqref{eq:LMI_schur_complement}).

To ease notation when analyzing expression \eqref{eq:stochastic_trajectory_match_2}, we temporarily introduce $\mathfrak{a}_t := \mfu_t+\mathfrak{R}_t^{\prime-1}\mathfrak{S}_t^\prime\mfx_t+\mathfrak{R}_t^{\prime-1}g_t^\prime$. From \eqref{eq:stochastic_trajectory_match_2} it then follows that
\begin{align*}
&0 = \mE_{\mfx_t|\mathscr{N}=\nu_2}\left[\|(\mathfrak{R}_t^\prime)^{1/2}\mathfrak{a}_t\|^2\right] = \mE_{\mfx_t|\mathscr{N}=\nu_2}\left[\mathfrak{a}_t^T \mathfrak{R}_t^\prime \mathfrak{a}_t^{} \right] \\
& =  \mE_{\mfx_t|\mathscr{N}=\nu_2}\left[\tr ( \mathfrak{a}_t^T \mathfrak{R}_t^\prime \mathfrak{a}_t^{} ) \right] = \tr ( \mathfrak{R}_t^\prime \mE_{\mfx_t|\mathscr{N}=\nu_2}\left[\mathfrak{a}_t^{} \mathfrak{a}_t^T\right]).
\end{align*}
Since $\mathfrak{R}_t^\prime \succ 0$ and $\mE_{\mfx_t|\mathscr{N}=\nu_2}\left[\mathfrak{a}_t^{} \mathfrak{a}_t^T\right] \succeq 0$, we must have that $\mE_{\mfx_t|\mathscr{N}=\nu_2}\left[\mathfrak{a}_t^{} \mathfrak{a}_t^T\right] = \mathbf{0}$. This means that
\begin{align*}
&0 = \tr(\mathfrak{R}_t^{\prime} \mathbf{0} \mathfrak{R}_t^{\prime T}) = \tr\left(\mathfrak{R}_t^{\prime} \mE_{\mfx_t|\mathscr{N}=\nu_2}\left[\mathfrak{a}_t^{} \mathfrak{a}_t^T\right] \mathfrak{R}_t^{\prime T} \right) \\
& = \mE_{\mfx_t|\mathscr{N}=\nu_2}\left[ \tr(\mathfrak{R}_t^{\prime}\mathfrak{a}_t^{} \mathfrak{a}_t^T \mathfrak{R}_t^{\prime T})\right] =
\mE_{\mfx_t|\mathscr{N}=\nu_2}\left[ \| \mathfrak{R}_t^{\prime}\mathfrak{a}_t \|^2 \right] \\
& =  \mE_{\mfx_t|\mathscr{N}=\nu_2}\left[ \| \mathfrak{R}_t^{\prime}\mfu_t+\mathfrak{S}_t^\prime\mfx_t+g_t^\prime \|^2 \right] \\
& = \mE_{\mfx_t|\mathscr{N}=\nu_2} [\| \mfu_t+B^TP_{t+1}^\prime(A\mfx_{t}+B\mfu_t)+B^T\eta_{t+1}^\prime \|^2],
\end{align*}
where the last equality comes from simply using the expressions for $\mathfrak{R}_t^{\prime}$, $\mathfrak{S}_t^\prime$, and $g_t^\prime$, as well as grouping the terms.
Moreover, by \eqref{eq:form_of-stochastic_u} we have that when conditioned on that $\mathscr{N}=\nu_2$, $\mfu_t = -B^T \bar{P}_{t+1}(A\mfx_{t}+B\mfu_t)-B^T\bar{\eta}_{t+1}$ for $t = 1:\nu_2-1$. Plugging this into the above gives that
\begin{align*}
0 & = \mE_{\mfx_t|\mathscr{N}=\nu_2} [\| B^T \Delta P_{t+1}(A\mfx_{t}+B\mfu_t)+B^T\Delta\eta_{t+1} \|^2] \\
& = \mE_{\mfx_t|\mathscr{N}=\nu_2} [\| B^T\begin{bmatrix}\Delta P_{t+1} &\Delta\eta_{t+1}\end{bmatrix} \bar{\tilde{\mfx}}_{t+1} \|^2],
\end{align*}
and rewriting this using the trace, as in \eqref{eq:stochastic_H_backslash_R}, as well as the fact that by Lemma \ref{lem:stochastic_persistent_excitation} we have that $\mE_{\mfx_t|\mathscr{N}=\nu_2}[\bar{\tilde{\mfx}}_{t+1}\bar{\tilde{\mfx}}_{t+1}^T]\succ 0$, we can conclude that
\[
 B^T\begin{bmatrix}\Delta P_{t+1} &\Delta\eta_{t+1}\end{bmatrix} = 0.
\]
Now, the uniqueness can be proved by following the same argument as in \citep[Thm.~2.1]{zhang2019inverse}.
\end{proof}

Note that in reality, the distributions of $\bar{\mfx}$, $\mfw_t$ and $\mathscr{N}$ are usually not a priori known, and hence it is not possible to calculate the objective function \eqref{eq:stochastic_IOC_objective_function} explicitly. Therefore, we cannot solve \eqref{eq:stochastic_IOC_opt_pro} directly, but instead need to empirically estimate the objective function.
To this end, let $M$ be the total number of trajectories observed, and let $M_N$ denote the number of observations which has $N$ time steps. Clearly, $\sum_{N = \nu_1}^{\nu_2} M_N = M$. First, we therefore approximate the expectation over $\mathscr{N}$ by using the empirical mean. Considering \eqref{eq:stochastic_ioc_obj_rewriting}, this is the same as estimating the probabilities $\mP(\mathscr{N}=N)$ using the empirical estimates  $M_N/M$.
Then, following along \citep[Sec.~4.2]{zhang2021inverse}, for each value of $N$ we approximate the inner expectation in \eqref{eq:stochastic_ioc_obj_rewriting} with the empirical mean
\begin{align*}
& \frac{1}{M_N} \sum_{i_N  = 1}^{M_N} \Big[\frac{1}{2}\mfx_{\nu_2}^{i_N T}P_{\nu_2}\mfx_{\nu_2}^{i_N}+\eta_{\nu_2}^T\mfx_{\nu_2}^{i_N} -\frac{1}{2}\mfx_{\nu_2-N+1}^{i_N T}P_{\nu_2-N+1}\\
&\times \mfx_{\nu_2-N+1}^{i_N} - \eta_{\nu_2-N+1}^T\mfx_{\nu_2-N+1}^{i_N} + \sum_{t=\nu_2-N+1}^{\nu_2-1} \Big( \frac{1}{2}\xi_t  \\
&+ \frac{1}{2} \mfx_t^{i_N T}Q\mfx_t^{i_N}  + q_t^T\mfx_t^{i_N} - \frac{1}{2}\tr(B^TP_{t+1}B\Sigma_w) \Big) \Big].
\end{align*}
Using both these approximations, we formulate the estimation problem as
\begin{align}
\min_{\substack{Q \in \mathbb{G}(\varphi)\\ \{ q_t \in \mR^n \}_{t=1:\nu_2} \\ \{P_{t} \in \mathbb{S}^n_+ \}_{t = 1:\nu_2},\\ \{ \eta_{t} \in\mathbb{R}^{n} \}_{t = 1:\nu_2},\\ \{ \xi_t \in \mR \}_{t=1:\nu_2-1}}} & \; \Psi^o(Q, q_{1:\nu_2},  P_{1:\nu_2}, \eta_{1:\nu_2},  \xi_{1:\nu_2-1})\nonumber\\
\st & \;  \text{\eqref{eq:stochastic_IOC_opt_pro_first_const}--\eqref{eq:stochastic_IOC_opt_pro_last_const} hold,}\label{eq:stochastic_ioc_approximation}
\end{align}
where
\begin{align*}
&\Psi^o(Q, q_{1:\nu_2},  P_{1:\nu_2}, \eta_{1:\nu_2},  \xi_{1:\nu_2-1}) = \frac{1}{M} \!\! \sum_{N = \nu_1}^{\nu_2} \sum_{i_N  = 1}^{M_N} \Big[ \\
&\frac{1}{2}\mfx_{\nu_2}^{i_NT}P_{\nu_2}\mfx_{\nu_2}^{i_N} + \eta_{\nu_2}^T\mfx_{\nu_2}^{i_N} -\frac{1}{2}\mfx_{\nu_2-N+1}^{i_N T}P_{\nu_2-N+1} \\
&\times \mfx_{\nu_2-N+1}^{i_N} -\eta_{\nu_2-N+1}^T\mfx_{\nu_2-N+1}^{i_N} +\sum_{t=\nu_2-N+1}^{\nu_2-1} \Big( \frac{1}{2}\xi_t\\
& + \frac{1}{2} \mfx_t^{i_N T}Q\mfx_t^{i_N}  + q_t^T\mfx_t^{i_N} - \frac{1}{2}\tr(B^TP_{t+1}B\Sigma_w) \Big) \Big].
\end{align*}

Next, we show that \eqref{eq:stochastic_ioc_approximation} is a statistically consistent estimator of the true parameter $\bar{Q}$. To this end, we first have the following two lemmas. For brevity, the details in the proofs are omitted.

\begin{lemma}\label{lem:bounded_domain_and_cost}
Let $\mathcal{D}$ be the set
\begin{align*}
& \{Q \in \mathbb{G}(\varphi),\; \{ q_t \in \mR^n \}_{t=1:\nu_2},\; \{P_{t} \in \mathbb{S}^n_+ \}_{t = 1:\nu_2}, \\
& \quad \{ \eta_{t} \in\mathbb{R}^{n} \}_{t = 1:\nu_2}, \;  \{ \xi_t \in \mR \}_{t=1:\nu_2-1} : \text{\eqref{eq:stochastic_IOC_opt_pro_first_const}--\eqref{eq:stochastic_IOC_opt_pro_last_const} hold}  \} .
\end{align*}
Then, $\mathcal{D}$ is compact and for any observed data the objective function $\Psi^o(Q, q_{1:\nu_2},  P_{1:\nu_2}, \eta_{1:\nu_2},  \xi_{1:\nu_2-1}) $ is bounded on $\mathcal{D}$.
\end{lemma}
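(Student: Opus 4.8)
The plan is to prove compactness by the standard route in finite dimensions: since all decision variables live in a Euclidean space, it suffices to show that $\mathcal{D}$ is closed and bounded, after which boundedness of $\Psi^o$ will follow because $\Psi^o$ is affine (hence continuous) in $(Q, q_{1:\nu_2}, P_{1:\nu_2}, \eta_{1:\nu_2}, \xi_{1:\nu_2-1})$ for any fixed observed data, and a continuous function on a compact set is bounded. Closedness is the easy part: the equality constraints \eqref{eq:stochastic_IOC_opt_pro_first_const} together with $q_t = -Qx_t^r$ define closed affine sets, the cone conditions $Q\in\mS^n_+$ and $P_t\in\mS^n_+$ define closed sets, the Frobenius ball $\|Q\|_F\le\varphi$ is closed, and each LMI $H_t\succeq 0$ in \eqref{eq:stochastic_IOC_opt_pro_last_const} is the preimage of the closed positive semidefinite cone under the continuous (indeed affine) matrix-valued map $(Q,q,P,\eta,\xi)\mapsto H_t$. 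Hence $\mathcal{D}$ is an intersection of closed sets, and therefore closed.

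Boundedness I would establish block by block, propagating bounds through the backward structure of the constraints. First, $Q$ is bounded directly by Assumption~\ref{ass:bounded_parameter}, and $\|Q\|_F\le\varphi$ gives the Loewner bound $0\preceq Q\preceq\varphi I$ (since $\lambda_{\max}(Q)\le\|Q\|_F$). As the reference signal $\{x_t^r\}$ is fixed and known, $q_t=-Qx_t^r$ is then bounded, with $\|q_t\|\le\varphi\|x_t^r\|$. For the $P_t$ I would argue by backward induction from $P_{\nu_2}=Q\preceq\varphi I$: the central diagonal block of $H_t$ is a principal submatrix of a positive semidefinite matrix, so $H_t\succeq 0$ forces $A^TP_{t+1}A+Q-P_t\succeq 0$, i.e.\ $P_t\preceq A^TP_{t+1}A+Q$; combined with $P_t\succeq 0$ and the induction hypothesis this gives $0\preceq P_t\preceq\Pi_t$, where $\Pi_{\nu_2}=\varphi I$ and $\Pi_t=A^T\Pi_{t+1}A+\varphi I$. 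Since each $\Pi_t$ depends only on $A$, $\varphi$ and $\nu_2$, this bounds every $P_t$ uniformly on $\mathcal{D}$, and hence also bounds $\mathfrak{R}_t=B^TP_{t+1}B+I$, $\mathfrak{S}_t=B^TP_{t+1}A$, and $\mathfrak{R}_t^{-1}$ (the latter because $\mathfrak{R}_t\succeq I$).

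The remaining and most delicate step is to bound the variables $\eta_t$ and $\xi_t$, and I expect this to be the main obstacle. Here I would again work backward from $\eta_{\nu_2}=q_{\nu_2}$, which is already bounded. Assuming $\eta_{t+1}$ is bounded, $g_t=B^T\eta_{t+1}$ is bounded, and I would try to extract a bound on $\beta_t=q_t+A^T\eta_{t+1}-\eta_t$ and on $\xi_t$ from $H_t\succeq 0$ by passing to the Schur complement with respect to $\mathfrak{R}_t$, as in \eqref{eq:Ht_Schur_comp}: positivity of the resulting $2\times2$ block couples $\beta_t$, $\xi_t$ and the already-bounded data, and a bound on $\beta_t$ then transfers to $\eta_t$ via $\eta_t=q_t+A^T\eta_{t+1}-\beta_t$. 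The difficulty is that the bound extracted from the $2\times2$ Schur block must be made \emph{uniform} over all feasible $(P_t,P_{t+1})$, which requires controlling the range and the effective conditioning of the Schur-complement block $A^TP_{t+1}A+Q-P_t-\mathfrak{S}_t^T\mathfrak{R}_t^{-1}\mathfrak{S}_t$ across the feasible set; this is precisely where the argument is subtle and where care is needed. Once all five variable blocks are bounded, $\mathcal{D}$ is closed and bounded, hence compact, and the affine $\Psi^o$ is then continuous and therefore bounded on $\mathcal{D}$.
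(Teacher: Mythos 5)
Your closedness argument, the bounds on $Q$ and $q_t$, and the backward induction $0 \preceq P_t \preceq \Pi_t$ with $\Pi_t = A^T\Pi_{t+1}A + \varphi I$ (extracted from the middle diagonal block of $H_t$ in \eqref{eq:stochastic_IOC_opt_pro_last_const} being positive semidefinite) are all correct, and this is in fact essentially the entire content of the argument the paper invokes: its one-line proof defers to \citep[Lem.~4.1]{zhang2021inverse}, where the decision variables are only $(Q, P_{1:N})$ and no $\eta_t$, $\xi_t$ appear, so the $P$-bounding induction you wrote out is the whole referenced proof. Your observation that $\Psi^o$ is affine in the decision variables for fixed data, hence bounded on any compact set, is also correct.

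The genuine gap is the step you defer, and it is not merely ``delicate'' --- as proposed it cannot be carried out, because the LMI constraints are monotone in $\xi_t$: if a point of $\mathcal{D}$ is feasible, replacing $\xi_t$ by $\xi_t + c$ with $c>0$ changes $H_t$ by $c\,e e^T \succeq 0$ (with $e$ the last standard basis vector) and preserves feasibility, so \emph{no} upper bound on $\xi_t$ can ever be extracted from $H_t \succeq 0$, and $\mathcal{D}$ as literally defined is unbounded along every $\xi_t$ coordinate. This also defeats your plan for $\eta_t$: once $\xi_t$ is free to be large, the $2\times 2$ Schur block constrains $\beta_t$ only through the range condition $\beta_t - \mathfrak{S}_t^T\mathfrak{R}_t^{-1}g_t \in \range\!\left(A^TP_{t+1}A + Q - P_t - \mathfrak{S}_t^T\mathfrak{R}_t^{-1}\mathfrak{S}_t\right)$, and whenever that block is nonsingular (e.g.\ $Q \succ 0$ with $P_t$ chosen strictly below the Riccati update, which is feasible) every $\beta_t \in \mR^n$ becomes admissible for $\xi_t$ large enough, so $\eta_t = q_t + A^T\eta_{t+1} - \beta_t$ escapes to infinity inside $\mathcal{D}$. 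Hence the uniform bound you hope to obtain ``by controlling the range and effective conditioning'' does not exist, and a correct treatment must restrict the set rather than mine the LMIs for bounds: for instance, note that each $\xi_t$ enters $\Psi^o$ with nonnegative weight, so without loss of generality for the minimization one may pin $\xi_t$ at the minimal value permitted by the Schur complement of $H_t$, and boundedness of the $\eta_t$ must then come from a coercivity or sublevel-set argument involving the objective, not from feasibility alone. This is precisely the point that the paper's citation of \citep{zhang2021inverse} glosses over, since those variables are absent there; your proposal reproduces the citable part faithfully but its plan for the new part would fail.
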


\begin{proof}
This follows by adapting the arguments in \citep[Proof of Lem.~4.1]{zhang2021inverse}.
\end{proof}

\begin{lemma}[Uniform law of large numbers]\label{lem:ulln}
 Under Assumptions~\ref{ass:planning_horizon}, \ref{ass:AB}, \ref{ass:IID}, \ref{ass:persistent_excitation} and \ref{ass:bounded_parameter}, the optimal value
\begin{align*}
\sup & \; | \Psi^o(Q, q_t, P_t, \eta_t, \xi_t) -  \Psi(Q, q_t, P_t, \eta_t, \xi_t)| \\
\st & \; (Q, q_t, P_t, \eta_t, \xi_t) \in \mathcal{D}
\end{align*}
converges to $0$ almost surely as $M \to \infty$.
\end{lemma}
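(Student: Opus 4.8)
The plan is to recognize the claimed statement as an instance of a standard uniform law of large numbers (ULLN) for empirical means of i.i.d.\ random functions indexed by a compact parameter set, and then to verify the handful of hypotheses such a result requires. The first step is to rewrite the empirical objective as a genuine i.i.d.\ sample average. Collecting each observed trajectory together with its realized horizon into a single random element $\omega_i := (\mfx_{1:\nu_2}^{i}, N^{(i)})$, and writing $\theta := (Q, q_{1:\nu_2}, P_{1:\nu_2}, \eta_{1:\nu_2}, \xi_{1:\nu_2-1})$ for the decision variable ranging over $\mathcal{D}$, the double sum defining $\Psi^o$ collapses to
\begin{equation*}
\Psi^o(\theta) = \frac{1}{M}\sum_{i=1}^{M} \phi(\omega_i, \theta), \qquad \Psi(\theta) = \mE_{\omega}[\phi(\omega,\theta)],
\end{equation*}
where $\phi$ is the per-trajectory summand read off from \eqref{eq:stochastic_IOC_objective_function}. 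By Assumption~\ref{ass:IID} the $\omega_i$ are i.i.d., the empirical frequencies $M_N/M$ that appear when grouping by horizon are absorbed automatically into this single average (so the random counts $M_N$ need no separate treatment), and the identity $\Psi(\theta)=\mE_{\omega}[\phi(\omega,\theta)]$ is exactly the rewriting \eqref{eq:stochastic_ioc_obj_rewriting}.

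With this reformulation in place, the task reduces to checking three conditions: (i) $\mathcal{D}$ is compact, which is precisely Lemma~\ref{lem:bounded_domain_and_cost}; (ii) for almost every $\omega$ the map $\theta \mapsto \phi(\omega,\theta)$ is continuous, which is immediate because $\phi$ is affine in $\theta$, the trajectory states entering only as fixed data; and (iii) an integrable envelope, i.e.\ a function $d(\omega)$ with $\mE[d(\omega)]<\infty$ such that $|\phi(\omega,\theta)|\le d(\omega)$ uniformly over $\theta\in\mathcal{D}$. Given (i)--(iii), the standard ULLN (argued analogously to \citep{zhang2021inverse}) delivers the claimed almost sure uniform convergence.

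The only genuine obstacle is constructing the envelope in (iii). Since $\theta$ lives in the compact set $\mathcal{D}$, Assumption~\ref{ass:bounded_parameter} together with the compactness from Lemma~\ref{lem:bounded_domain_and_cost} furnishes a single constant $C<\infty$ bounding all of $\|Q\|_F, \|q_t\|, \|P_t\|_F, \|\eta_t\|, |\xi_t|$ simultaneously over $\mathcal{D}$; the constant trace terms $\tfrac12\tr(B^TP_{t+1}B\Sigma_w)$ are likewise controlled using $\|\Sigma_w\|_F<\infty$. Because each summand of $\phi$ is at most quadratic in the states with coefficients governed by these norms, one obtains a bound of the form $|\phi(\omega,\theta)|\le C'\big(1+\sum_{t=1}^{\nu_2}\|\mfx_t\|^2\big)=:d(\omega)$, uniformly in $\theta$. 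It then remains to verify $\mE[d(\omega)]<\infty$, i.e.\ that the observed states have finite second moments; this is supplied directly by the moment bound $\mE[\|\tilde{\mfx}_t\|^2]<\infty$ in Lemma~\ref{lem:stochastic_persistent_excitation}, which itself rests on the finite-moment parts of Assumptions~\ref{ass:IID} and \ref{ass:persistent_excitation} propagated through the linear dynamics \eqref{eq:stochastic_forward_problem_dynamics}. With the integrable envelope secured, all hypotheses of the ULLN are met and the stated convergence follows.
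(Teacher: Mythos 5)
Your proposal is correct and follows essentially the same route as the paper: the paper's proof likewise reduces the claim to a standard uniform law of large numbers (citing Jennrich's Theorem~2) by constructing a parameter-independent integrable envelope for the integrand in $\Psi$, using the moment bounds $\mE[\|\tilde{\mfx}_t\|^2]<\infty$ from Lemma~\ref{lem:stochastic_persistent_excitation} and the compactness of $\mathcal{D}$ from Lemma~\ref{lem:bounded_domain_and_cost}. Your write-up simply makes explicit (the i.i.d.\ reformulation, affinity in the parameters, and the quadratic envelope $C'(1+\sum_t\|\mfx_t\|^2)$) what the paper defers to cited references.
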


\begin{proof}
This follows by bounding the argument inside the expectation in $\Psi(Q, q_{1:\nu_2},  P_{1:\nu_2}, \eta_{1:\nu_2},  \xi_{1:\nu_2-1})$, given in \eqref{eq:stochastic_IOC_objective_function}, from above by an integrable function of the random variables and which is independent of the parameters $(Q, q_{1:\nu_2},  P_{1:\nu_2}, \eta_{1:\nu_2},  \xi_{1:\nu_2-1})$, and then applying \citep[Thm.~2]{jennrich1969asymptotic}. The former can be done via an argument similar to \citep[Proof of Lem.~4.2]{zhang2021inverse}, which relies on the bounds from Lemma~\ref{lem:stochastic_persistent_excitation}.
\end{proof}

\begin{theorem}[Statistical consistency]
Under Assumptions~\ref{ass:planning_horizon}, \ref{ass:AB}, \ref{ass:IID}, \ref{ass:persistent_excitation}, and \ref{ass:bounded_parameter}, the optimization problem \eqref{eq:stochastic_ioc_approximation} admits an optimal solution. For any such optimal solution $(Q_M^\star,q_{1:\nu_2}^\star, P_{1:\nu_2}^\star, \eta_{1:\nu_2}^\star,  \xi_{1:\nu_2-1}^\star)$, we have that $Q_M^\star \overset{p}\rightarrow \trueQ$ as $M\rightarrow \infty$.
\end{theorem}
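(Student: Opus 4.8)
The plan is to recognize \eqref{eq:stochastic_ioc_approximation} as a standard extremum (M-)estimator and to close the argument with the classical argmin-consistency sandwich, feeding in the three results already established: Theorem~\ref{thm:stochastic_ioc_unique_opt_sol} gives a \emph{unique} minimizer of the population objective $\Psi$, Lemma~\ref{lem:bounded_domain_and_cost} gives a compact feasible set $\mathcal{D}$ on which $\Psi^o$ is finite, and Lemma~\ref{lem:ulln} gives the uniform convergence $\sup_{\theta\in\mathcal{D}}|\Psi^o(\theta)-\Psi(\theta)|\to 0$ almost surely. Throughout I abbreviate the full tuple as $\theta=(Q,q_{1:\nu_2},P_{1:\nu_2},\eta_{1:\nu_2},\xi_{1:\nu_2-1})$ and write $\theta_0$ for the true tuple $(\trueQ,\bar{q}_{1:\nu_2},\bar{P}_{1:\nu_2},\bar{\eta}_{1:\nu_2},\bar{\xi}_{1:\nu_2-1})$, where $\bar{\xi}$ is the value pinned down by tightness of \eqref{eq:stochastic_IOC_opt_pro_last_const} at the optimum (the objective is strictly increasing in each $\xi_t$, so uniqueness from Theorem~\ref{thm:stochastic_ioc_unique_opt_sol} extends to the $\xi$-block and $\theta_0$ is a single point).

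Existence of an empirical minimizer is immediate: for each fixed realization of the observed trajectories, $\Psi^o$ is a quadratic, hence continuous, function of $\theta$, and by Lemma~\ref{lem:bounded_domain_and_cost} the feasible set $\mathcal{D}$ is compact. The Weierstrass extreme value theorem then guarantees that $\Psi^o$ attains its infimum on $\mathcal{D}$, for every $M$ and every realization, which yields a (possibly non-unique) minimizer $\theta_M^\star$.

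For consistency I would run the sandwich argument, which is robust to non-uniqueness of $\theta_M^\star$. Since $\Psi$ is continuous, $\mathcal{D}$ is compact, and $\theta_0$ is the unique minimizer of $\Psi$ over $\mathcal{D}$, for every $\varepsilon>0$ the restricted separation $\delta(\varepsilon):=\inf\{\Psi(\theta)-\Psi(\theta_0):\theta\in\mathcal{D},\ \|\theta-\theta_0\|\ge\varepsilon\}$ is attained and strictly positive. For any empirical minimizer $\theta_M^\star$, the optimality $\Psi^o(\theta_M^\star)\le\Psi^o(\theta_0)$ together with the uniform bound gives
\[
\Psi(\theta_M^\star)-\Psi(\theta_0)\le 2\sup_{\theta\in\mathcal{D}}|\Psi^o(\theta)-\Psi(\theta)|.
\]
On the event where the right-hand side is below $\delta(\varepsilon)$, the left-hand side is below $\delta(\varepsilon)$ as well, which by the definition of $\delta(\varepsilon)$ forces $\|\theta_M^\star-\theta_0\|<\varepsilon$. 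By Lemma~\ref{lem:ulln} the probability of this event tends to one as $M\to\infty$, so $\theta_M^\star\overset{p}\rightarrow\theta_0$; since $\|Q_M^\star-\trueQ\|_F\le\|\theta_M^\star-\theta_0\|$, projecting onto the $Q$-block yields $Q_M^\star\overset{p}\rightarrow\trueQ$.

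The genuinely hard content has already been absorbed into Theorem~\ref{thm:stochastic_ioc_unique_opt_sol} and Lemmas~\ref{lem:bounded_domain_and_cost}--\ref{lem:ulln}, so this last step is mostly bookkeeping; the one point that must be handled with care is the strict positivity of the separation $\delta(\varepsilon)$, which relies crucially on the \emph{uniqueness} of the population minimizer (a non-unique argmin would only give convergence to the minimizing set, not to $\trueQ$), and on extending that uniqueness to the $\xi$-block so that $\theta_0$ is a genuine isolated point of the population problem.
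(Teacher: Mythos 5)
Your proposal is correct, and in substance it is the same proof as the paper's: both feed the identical three ingredients---uniqueness of the population minimizer (Theorem~\ref{thm:stochastic_ioc_unique_opt_sol}), compactness of $\mathcal{D}$ (Lemma~\ref{lem:bounded_domain_and_cost}), and the uniform law of large numbers (Lemma~\ref{lem:ulln})---into the standard argmin-consistency machinery for extremum estimators. The only structural difference is in the last step: the paper closes by citing Theorem~5.7 of \citep{van1998asymptotic} and verifying its two conditions (uniform convergence in probability, and well-separation of the minimizer via compactness plus uniqueness), whereas you inline the proof of that theorem through the explicit well-separation constant $\delta(\varepsilon)$ and the sandwich inequality $\Psi(\theta_M^\star)-\Psi(\theta_0)\le 2\sup_{\theta\in\mathcal{D}}|\Psi^o(\theta)-\Psi(\theta)|$. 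This buys self-containedness at no conceptual cost. Two details in your write-up improve on the paper's exposition and are worth retaining: first, you invoke continuity of $\Psi^o$ in the Weierstrass step (the paper states only compactness and boundedness, which by themselves do not give attainment; continuity holds since $\Psi^o$ is affine in the decision variables); second, you explicitly extend the uniqueness of Theorem~\ref{thm:stochastic_ioc_unique_opt_sol} to the $\xi$-block, observing that each $\xi_t$ enters $\Psi$ with coefficient $\tfrac{1}{2}\mP(\mathscr{N}\ge \nu_2-t+1)\ge\tfrac{1}{2}\mP(\mathscr{N}=\nu_2)>0$ and is therefore pinned to its minimal feasible value given the (unique) remaining variables. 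This is genuinely needed for $\theta_0$ to be an isolated minimizer of the population problem---the hypothesis behind the well-separation step---and the paper's proof passes over it silently.
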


\begin{proof}
It can be readily seen that \eqref{eq:stochastic_ioc_approximation} is convex, and hence any locally optimal solution is globally optimal. Moreover, by Lemma~\ref{lem:bounded_domain_and_cost}, the feasible domain of  \eqref{eq:stochastic_ioc_approximation} is compact and $\Psi^o$ is bounded on the feasible domain. Hence, \eqref{eq:stochastic_ioc_approximation} attains an optimal solution. The result now follows from \citep[Thm.~5.7]{van1998asymptotic}. In particular, since convergence almost surely implies convergence in probability \citep[Lem.~3.2]{kallenberg1997foundations}, Lemma~\ref{lem:ulln} implies that the first condition in the preceding theorem is satisfied. Moreover, the fact that $\mathcal{D}$ is compact (by Lemma~\ref{lem:bounded_domain_and_cost}) together with the fact that the optimal solution to \eqref{eq:stochastic_IOC_opt_pro} is unique (by Theorem~\ref{thm:stochastic_ioc_unique_opt_sol}) implies that the second condition of the preceding theorem holds. Hence the result follows.
\end{proof}

We end this section with a few remarks.

\begin{remark}
Most of the analysis still holds if the state terminal and running cost in the objective function of forward problem \eqref{eq:stochastic_forward_problem} is of the form $\frac{1}{2}\mfx_t^T\bar{Q}\mfx_t+\bar{q}^T\mfx_t$, i.e., with a constant unknown linear cost term $\trueq$ that does not relate to $\trueQ$ as in the tracking problem.
Extension to recover the unknown constant parameters $(\bar{Q},\bar{q})$ is straight-froward.
\end{remark}

\begin{remark}
The IOC algorithm \eqref{eq:stochastic_ioc_approximation} can easily be extended to the case where the observed data is obtained from experiments performed with a number of different reference signals for the tracking. The estimator is still formulated as the solution to a convex optimization problem akin to \eqref{eq:stochastic_ioc_approximation}, however for each reference signal we get a set of LMIs \eqref{eq:stochastic_IOC_opt_pro_first_const}--\eqref{eq:stochastic_IOC_opt_pro_last_const} as constraints.
\end{remark}

\section{Simulations and real-world experiments}
In this section we demonstrate the developed method, both on synthetic data in a simulation study and on real measurement data from an experimental set-up.
The simulation study uses the same underlying model as the experimental set-up, and hence we first describe the latter.
Since we do not know what the ``true" $\bar{Q}$ is for a real agent in practice, we create synthetic data for simulation to illustrate the statistical consistency of proposed algorithm. The performance of the algorithm in real practice is illustrated by first computing a predicted tracking trajectory based on the estimated $Q_{est}$ and then comparing the predicted trajectory with the trajectories collected in experiments.

\subsection{Experimental set-up}\label{sec:experiment_set_up}
We apply the IOC algorithm developed to identify a model for the locomotion of the human elbow.
Our interest in this set-up stem from its potential in the area of personalized rehabilitation robotics.
More precisely, for rehabilitation of hemiparesis,%
\footnote{Hemiparesis is a medical condition of reduced mobility or weakness in one side of the body that can occur, e.g., after stroke or trauma \citep{hemiparesis}.}
studies have shown that high-intensity, repeated training of specific tasks is an effective method to help patients recover some of the lost motor skills \citep{butefisch1995repetitive, kwakkel1997effects, kwakkel2002long, bayona2005role, hogan2006motions}.
In this setting, tracking a reference signal is one of the fundamental rehabilitation tasks,
but due to personal differences between the patients the tracking locomotion patterns could vary. This motivates personalized controller design for rehabilitation robots that help people in their rehabilitation therapy.
Under the assumption of optimal behavior (cf.~\citep{alexander1996optima}), the idea is to use IOC to identify the objective function used by the ``healthy'' side of the patient when performing certain tracking task. This objective function would reflect personal habits and preferences. The estimated objective function could then be used to design a control strategy for the rehabilitation robot that helps minimize the performance difference between the ``healthy'' and the ``unhealthy'' side.
\begin{figure}
\centering
\begin{subfigure}{.213\textwidth}
  \centering
  \includegraphics[width=1\linewidth]{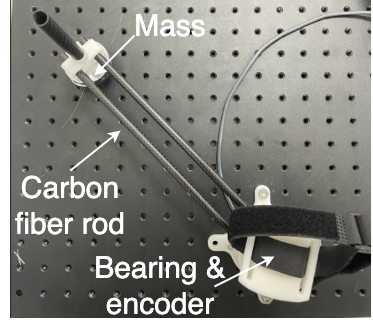}
  \caption{The experimental device}
  \label{fig:experiment_setup}
\end{subfigure}%
\begin{subfigure}{.287\textwidth}
  \centering
  \includegraphics[width=1\linewidth]{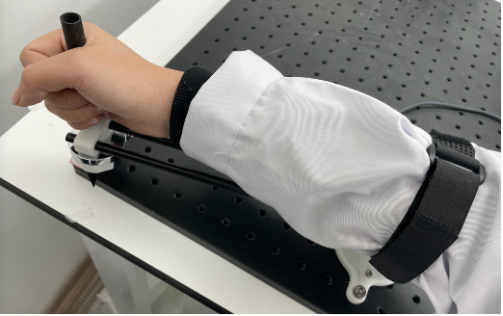}
  \caption{Human tracking locomotion}
  \label{fig:experiment_setup_1}
\end{subfigure}
\caption{
(\ref{fig:experiment_setup}): A mass of 0.2kg is attached to a light carbon fiber rod and can rotate around the axis. The rotation is performed via a bearing to eliminate potential friction as much as possible. The distance between the centre of the mass and the axis is 0.255 metres. The angular position and velocity of the mass are measured every 0.05 seconds by an encoder attached to the axis.
(\ref{fig:experiment_setup_1}): During the experiment, the test subject is using the device from Fig.~\ref{fig:experiment_setup} as shown in this image. Moreover, two sliding blocks that correspond to the reference signal and the state of the experimental device are shown on the computer screen. The human holds the handle and rotate his/her elbow to control the corresponding sliding block on the screen so that it tracks the reference signal.}
\label{fig:test}
\end{figure}

The experimental device used is illustrated in Fig.~\ref{fig:test}. With this device set-up, the ideal continuous-time dynamics of the mass is given by
\begin{align*}
\dot{x}=\underbrace{
\begin{bmatrix}
0 &1\\0 &0
\end{bmatrix}}_{\hat{A}}x+\underbrace{
\begin{bmatrix}
0\\\frac{1}{I}
\end{bmatrix}}_{\hat{B}}u,
\end{align*}
where $x=[x_1,x_2]^T$, and where $x_1$, $x_2$, $u\in\mathbb{R}$ denote the angular position, the angular velocity, and the torque applied to the axis, respectively. Moreover, $I=m\ell^2$ denotes the moment of inertia, where $m=0.2$ and $\ell=0.255$ denote the mass (in kilograms) and the distance (in meters) between the center of the mass and the axis, respectively. Since measurements are taken every 0.05 seconds, we discretize the ideal system dynamics in time and obtain the discrete time system $(A,B)$, where $A=e^{\hat{A}\Delta t}$, $B=\int_0^{\Delta t}e^{\hat{A} t}dt\hat{B}$, and where $\Delta t=0.05$. However, due to the existence of friction in the bearing (albeit small), as well as a certain amount of intrinsic randomness in human locomotion, in the discrete-time model we add a process noise $\mfw_t$ to the control input and hence get a discrete-time system in the form of \eqref{eq:stochastic_forward_problem_dynamics}. 

Notably, the aforementioned discrete-time dynamics is mathematically the same as the model for pushing a point mass along a one-dimensional line. Hence, in the experimental set-up the state $\mfx_t$ is illustrated on a computer screen by a corresponding block sliding back and forth in one dimension. The movement of the block can be controlled by operating the experiment device, as illustrated in Fig.~\ref{fig:test}.
In the elbow rehabilitation tracking task, we set $\nu_2 = 120$ and generate an oscillating reference signal $x_{1:120}^r$ as
\begin{align}
x_{t+1}^r &= Ax_t^r+Bu_t^r, \quad t=1:\nu_2=120, \label{eq:ref_signal}\\
x_1^r &=[0, -0.5]^T, \quad u_t^r = 0.01\sin(\frac{\pi}{40}\cdot t) \nonumber.
\end{align}
This defines the behavior of the ``target"  block to track on the computer screen. The reference signal is shown multiple times to the human before the test starts, so that he/she is totally aware of the reference signal. 
Once the test starts, the reference signal is played on the computer screen. The human can choose when to start the tracking, and the data is recorded until the final time instant $\nu_2=120$.

Furthermore, to measure the mean and covariance of the process noise, i.e., to estimate $\mE[\mfw_t]$ and $\Sigma_w$, we construct another reference signal. This signal moves at a constant speed, and we let the human track it. The rationale of such set-up lies in the fact that, once the task of tracking a constant velocity reference signal gets close to a ``steady state'', the control input $\mfu_t$ should be close to zero. Hence any fluctuations of the velocity realizations $x_{t,2}$ can be modeled as caused merely by the process noise $\mfw_t$. Thus, the realizations of the process noise $\mfw_t$ are calculated by $w_t^i = B^\dagger(x_{t+1}^i-Ax_t^i)$, where $^\dagger$ denotes the Moore-Penrose generalized inverse. 
We use the standard empirical estimation to estimate the mean $\mE[\mfw_t]$ as well as the covariance $\Sigma_w$, and obtain $\mE[\mfw_t]\approx 3.3667\times 10^{-4} \approx 0$ and $\Sigma_w\approx 6.8062\times 10^{-4}$.

Finally, to solve the estimation problem \eqref{eq:stochastic_ioc_approximation} we implement the problem in Matlab using YALMIP \citep{lofberg2004yalmip} and then solve it numerically using MOSEK \citep{mosek}.

\subsection{Simulation results}\label{sec:sim_results}
To show the performance of our IOC algorithm and also illustrate the statistical consistency, we use the same parameters as in the experiment set-up described in Sec.~\ref{sec:experiment_set_up} and simulate $M=5000$ observed trajectories. 
The length of each trajectory, denoted $N^i$, is generated by sampling from a uniform distribution on the integers in $[80,120]$.
For each simulated trajectory, the angular position from which the tracking starts is generated by $x_{120-N^i+1,1}=x^r_{120-N^i+1,1}+\varepsilon^i$, where $\varepsilon^i$ is sampled from a uniform distribution on $[-\frac{\pi}{6},\frac{\pi}{6}]$, and the angular velocity from which the tracking starts is set to zero, i.e., $x_{120-N^i+1,2}=0$. The ``true" parameter $\bar{Q}$ in the objective function is set to $0.01I$. In the reconstruction, we set $\varphi$ in Assumption~\ref{ass:bounded_parameter} to $50$.
Relative errors of the estimates obtained using the developed algorithm are shown in Fig.~\ref{fig:statistical_consistency}, and as can be seen the relative estimation error of the parameter decreases as $M$ increases, illustrating the statistical consistency.

\begin{figure}[!htpb]
\centering
\includegraphics[width=0.4\textwidth, trim=6 6 30 4, clip]{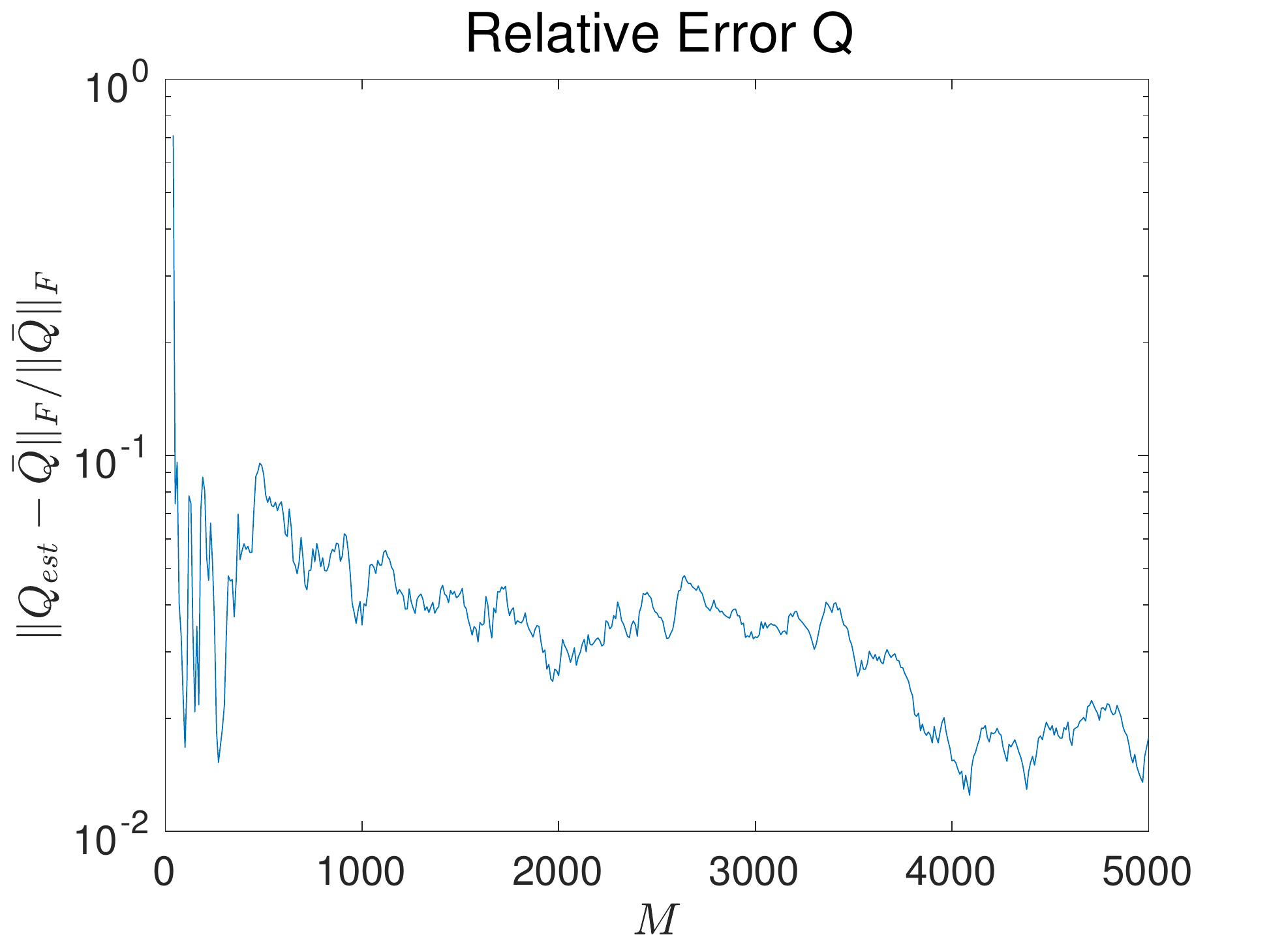}
\caption{The relative estimation error vs.~the number of observed trajectories $M$ for the simulation experiment described in Sec.~\ref{sec:sim_results}.}
\label{fig:statistical_consistency}
\end{figure}

\subsection{Experimental results}\label{sec:exp_results}
We apply the developed algorithm to data from experiments using the experiment set-up described in Sec.~\ref{sec:experiment_set_up}. More precisely, we collect $M=500$ trajectories from such tracking experiments. The parameter estimate $Q_{est}$ obtained, based on the human tracking data, is
\[
Q_{est}=
\begin{bmatrix}
0.0043    &0.0017\\
0.0017  &0.0007
\end{bmatrix}.
\]

To verify the validity of this estimate, we compare the predicted tracking behavior with measured tracking behavior on two new reference signals. In particular, the new reference signals are generated as in \eqref{eq:ref_signal}, but where the first one is generated by $u_t^{r_1} = 0.01\cos(\tfrac{\pi}{40}t)$ and $x_1^{r_1} = [0,0.05]^T$, and second is generated by $u_t^{r_2} = 0$ and $x_1^{r_2} = [0,0.2]^T$. Notably, the new reference signals are \emph{different} from the one that is used to collect the training data used in the IOC algorithm.
With these new reference signals, tracking experiments are performed and data is collected. 
Nevertheless, in order to evaluate the prediction accuracy despite the process noise, in each tracking experiment performed we let the human start the tracking at the same angular position each time (and with a zero angular velocity).
To obtain the predicted tracking behavior, we numerically solve the forward optimal control problem using the estimated parameter $Q_{est}$ and the two new reference signals, but without the process noise. The process noise is removed from the generation of the prediction in order to facilitate the comparison, since the form of the control signal in \eqref{eq:form_of-stochastic_u} is not explicitly dependent on the process noise. For each reference signal and planing horizon we hence obtain a predicted trajectory $x_{\nu_2-N+1:\nu_2}^{pred}$. The predicted trajectory and the actual collected trajectories, for both reference signals, are illustrated in Fig.~\ref{fig:verification_result}. As can be seen, the prediction based on estimated parameter $Q_{est}$ is quite accurate in the sense that the actual human tracking locomotion is well-predicted by the trajectories computed by the corresponding $Q_{est}$.
This indicates that the model proposed in the paper describes actual human tracking locomotion well, and thus provide a good model for rehabilitation robots controller design in personalized rehabilitation.
\begin{figure}[!htpb]
\centering
\includegraphics[width = 0.5\textwidth, trim=25 4 30 2, clip]{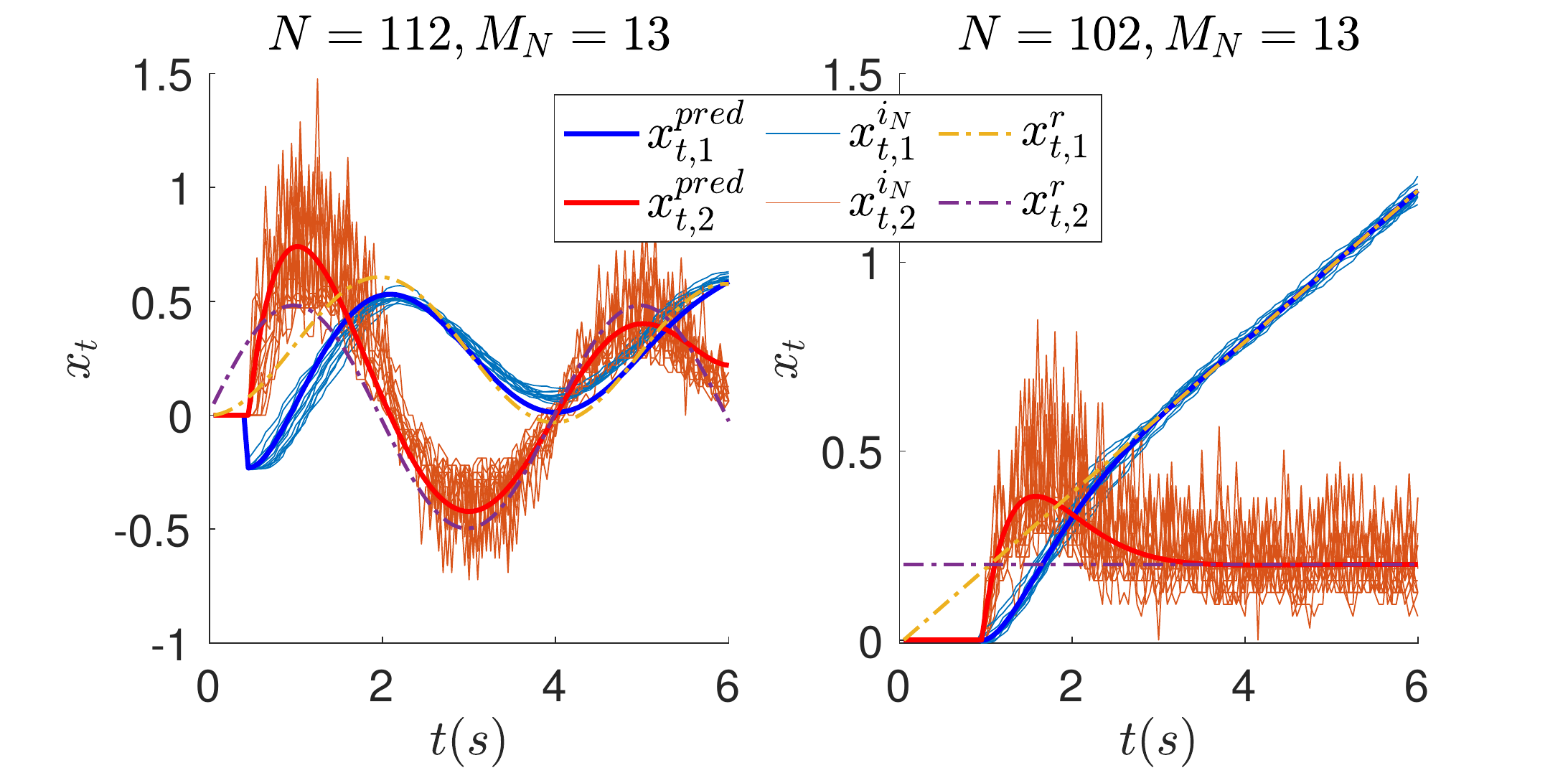}
\caption{Illustration of the predicted signals and the measure signals, as described in Sec.~\ref{sec:exp_results}. Due to space limitation, we only show the cases of the planing horizon $N=112$ (for which $M_N=13$ ) for $x_t^{r_1}$ and the planing horizon $N=102$ (for which $M_N=13$) for $x_t^{r_2}$. Similar results are obtained for the other planning horizon lengths.}
\label{fig:verification_result}
\end{figure}

\section{Conclusion and future work}
IOC provides a powerful theoretical framework for modelling and identifying expert systems. Namely, it enables data-driven objective function design in optimal control, where ``expert data" can be used to develop a control strategy that suits the contextual environment. As a conclusion:
\begin{enumerate}
\item We develop a statistically consistent IOC algorithm for linear-quadratic tracking problems with random planing horizons, which has not been considered before. Treating the planning horizon as a random variable allows for a systematic way to handle observed data records of different time lengths, while still retaining guarantees about statistical consistency. The statistical consistency ensure the robustness of the estimate and has not been considered in existing IOC frameworks that are more general, such as \citep{keshavarz2011imputing, hatz2012estimating, pauwels2016linear, rouot2017inverse, molloy2018finite, molloy2020online}.
\item The proposed IOC algorithm is based on convex optimization. The convexity guarantees that the estimate obtained is the global optima, and hence that the theoretical statistical consistency is actually achieved in real practice. This has not been achieved in existing work, such as \citep{molloy2018finite, molloy2020online, zhang2019inverse}.
\item The proposed IOC algorithm is implemented and verified on both simulated and real data. The former data is used to demonstrate the statistical consistency in practice, while the latter is used to demonstrate the capabilities of the method in applications. More precisely, the data is collected in an experimental set-up with a human tracking task motivated from rehabilitation robotics, and the results show that the actual human tracking locomotion is well-predicted by the corresponding estimated objective function -- even when the reference signals are different from the one that is used for training.
\end{enumerate}

Future work includes extending the results to more sophisticated nonlinear IOC problems and applying the results to the controller design for actual rehabilitation robots.

\bibliographystyle{ieeetr}
\bibliography{ref}
\end{document}